\renewcommand\eqref[1]{(\ref{#1})} 
\numberwithin{equation}{section}
\theoremstyle{plain}
\newtheorem{thm}{Theorem}[section]
\newtheorem{cor}[thm]{Corollary}
\theoremstyle{definition}
\newtheorem{rem}[thm]{Remark}
\renewcommand{\wp}{\mathfrak S}
\newcommand{\Rn}{\mathbb R^{n}}
\def\R{\mathcal R}
\def\L{\mathcal L}
\def\G{{\mathbb G}}
\begin{document}

   \title[Factorizations on stratified groups]
   {Factorizations and Hardy-Rellich inequalities on stratified groups}

\author[M. Ruzhansky]{Michael Ruzhansky}
\address{
  Michael Ruzhansky:
  \endgraf
  Department of Mathematics
  \endgraf
  Imperial College London
  \endgraf
  180 Queen's Gate, London SW7 2AZ
  \endgraf
  United Kingdom
  \endgraf
  {\it E-mail address} {\rm m.ruzhansky@imperial.ac.uk}
  }

\author[N. Yessirkegenov]{Nurgissa Yessirkegenov}
\address{
  Nurgissa Yessirkegenov:
  \endgraf
  Institute of Mathematics and Mathematical Modelling
  \endgraf
  125 Pushkin str.
  \endgraf
  050010 Almaty
  \endgraf
  Kazakhstan
  \endgraf
  and
  \endgraf
  Department of Mathematics
  \endgraf
  Imperial College London
  \endgraf
  180 Queen's Gate, London SW7 2AZ
  \endgraf
  United Kingdom
  \endgraf
  {\it E-mail address} {\rm n.yessirkegenov15@imperial.ac.uk}
  }

\thanks{The authors were supported in parts by the EPSRC
 grant EP/K039407/1 and by the Leverhulme Grant RPG-2014-02, as well as by the MESRK grant 0825/GF4. No new data was collected or
generated during the course of research.}

     \keywords{Hardy inequality, Hardy-Rellich inequality, factorization, homogeneous Lie group, stratified group, Heisenberg group}
     \subjclass[2010]{22E30, 43A80}

     \begin{abstract}
     In this paper, we obtain Hardy, Hardy-Rellich and refined Hardy inequalities on general stratified groups and weighted Hardy inequalities on general homogeneous groups using the factorization method of differential operators, inspired by the recent work of
     Gesztesy and Littlejohn \cite{GL17}.
We note that some of the obtained inequalities are new also in the usual Euclidean setting.
We also obtain analogues of Gesztesy and Littlejohn's 2-parameter version of the Rellich inequality on stratified groups and on the Heisenberg group, and a new two-parameter estimate on $\Rn$ which can be regarded as a counterpart to the Gesztesy and Littlejohn's estimate.
     \end{abstract}
     \maketitle

\tableofcontents

\section{Introduction}
\label{SEC:intro}
Consider the Hardy inequality
\begin{equation}\label{Hardy}
\int_{\Rn}|(\nabla f)(x)|^{2}dx\geq \left(\frac{n-2}{2}\right)^{2}\int_{\Rn}\frac{|f(x)|^{2}}{|x|^{2}}dx,\;\;n\geq3,
\end{equation}
for all functions $f\in C_{0}^{\infty}(\Rn\backslash\{0\})$, where $\nabla$ is the standard gradient in $\Rn$. The purpose of this paper is to obtain Hardy, weighted Hardy, improved Hardy and Hardy-Rellich inequalities on stratified, on Heisenberg and on homogeneous groups by factorizing differential expressions.

Therefore, let us first recall known results in this direction. In the recent paper \cite{GL17}, inspiring our work, Gesztesy and Littlejohn used the nonnegativity of $T^{+}_{\alpha,\beta}T_{\alpha,\beta}$ on $C_{0}^{\infty}(\Rn\backslash\{0\})$, for two-parameter differential expressions $$T_{\alpha,\beta}:=-\triangle+\alpha|x|^{-2}x\cdot\nabla+\beta|x|^{-2}$$ and its formal adjoint
$$
T^{+}_{\alpha,\beta}:=-\triangle-\alpha|x|^{-2}x\cdot\nabla+(\beta-\alpha(n-2))|x|^{-2},
$$
for $\alpha,\beta\in\mathbb{R}$, $x\in\Rn\backslash\{0\}$ and $n\geq2$.
As a result, they obtained the following inequality for all $f\in C_{0}^{\infty}(\Rn\backslash\{0\})$:
\begin{equation}\label{Gesztesy_ineq}
\begin{split}
\int_{\Rn}|(\triangle f)(x)|^{2}dx\geq&((n-4)\alpha-2\beta)\int_{\Rn}|x|^{-2}|(\nabla f)(x)|^{2}dx\\
&-\alpha(\alpha-4)\int_{\Rn}|x|^{-4}|x\cdot(\nabla f)(x)|^{2}dx\\
&+\beta((n-4)(\alpha-2)-\beta)\int_{\Rn}|x|^{-4}|f(x)|^{2}dx,
\end{split}
\end{equation}
which implies classical Rellich and Hardy-Rellich-type inequalities after choosing special values of parameters $\alpha$ and $\beta$.

We refer to \cite{GL17} for a thorough discussion of the factorization method, its history and different features.
We also refer to \cite{GP80} for obtaining the Hardy inequality and to \cite{G84} for logarithmic refinements by this factorization method.

We note that among other things, in Corollary \ref{cor_Rn2}
we show the counterpart of the Gesztesy and Littlejohn estimate \eqref{Gesztesy_ineq} which follows from the nonnegativity of the operator $T_{\alpha,\beta}T^{+}_{\alpha,\beta}$, namely,
 for $\alpha,\beta\in \mathbb{R}$ and $f\in C_{0}^{\infty}(\Rn\backslash\{0\})$ with $n\geq2$, we have
\begin{equation}\label{heisen_Rellich11_Rn0}
\begin{split}
 \int_{\Rn}|(\triangle f)(x)|^{2}dx
&\geq (n\alpha-2\beta)\int_{\Rn}|x|^{-2}|(\nabla f)(x)|^{2}dx
\\ & -\alpha(\alpha+4)\int_{\Rn}|x|^{-4}|x\cdot(\nabla f)(x)|^{2}dx\\
&+(2(n-4)(\alpha(n-2)-\beta)-2\alpha^{2}(n-2)+\alpha\beta n-\beta^{2})\times
\\ & \quad \times\int_{\Rn}|x|^{-4}|f(x)|^{2}dx.
\end{split}
\end{equation}

For the convenience of the reader let us now shortly recapture the main results of this paper, first for general homogeneous groups, then for stratified groups, and finally for the Heisenberg group, where more precise expressions are possible due to the possibility of using explicit formulae for its commutators.

The following result is new already in the usual setting of $\Rn$, however, we may also formulate it in the general setting of homogeneous groups in the sense of Folland and Stein \cite{FS-Hardy}. We recall the details of such construction in Section \ref{SEC:prelim}.
\begin{itemize}
\item Let $\mathbb{G}$ be a homogeneous group
of homogeneous dimension $Q\geq 3$ and let $|\cdot|$ be an arbitrary homogeneous quasi-norm.
Let $\mathcal{R}:=\frac{d}{d|x|}$ be the radial derivative.
Let $\phi,\psi\in L_{loc}^{2}(\mathbb{G}\backslash\{0\})$ be any real-valued functions such that $\R \phi,\R \psi\in L_{loc}^{2}(\mathbb{G}\backslash\{0\})$. Let $\alpha\in\mathbb{R}$. Then for all complex-valued functions $f\in C_{0}^{\infty}(\mathbb{G}\backslash\{0\})$ we have
$$\int_{\G}(\phi(x))^{2}|\R f(x)|^{2}dx$$
$$\geq \alpha \int_{\G}\left(\phi(x)\R\psi(x)+\psi(x) \R \phi(x)+(Q-1)\frac{\phi(x)\psi(x)}{|x|}\right)|f(x)|^{2}dx$$
$$
-\alpha^{2}\int_{\G}(\psi(x))^{2}|f(x)|^{2}dx,
$$
and also
$$\int_{\G}(\phi(x))^{2}|\R f(x)|^{2}dx$$
$$\geq \alpha \int_{\G}\left(\psi(x) \R \phi(x)-\phi(x)\R \psi(x)+(Q-1)\frac{\phi(x)\psi(x)}{|x|}\right)|f(x)|^{2}dx$$
$$
-\alpha^{2}\int_{\G}(\psi(x))^{2}|f(x)|^{2}dx
$$
$$+\int_{\G}((Q-1)\frac{\phi(x)\R \phi(x)}{|x|}-(Q-1)\frac{(\phi(x))^{2}}{|x|^{2}}+\phi(x)\R^{2}\phi(x))|f(x)|^{2}dx.
$$
A number of consequences of these estimates for different choices of functions $\phi$ and $\psi$ are given in Section \ref{SEC:Fac_hom}. In particular, the above estimates are new already in the usual Euclidean setting, in which case we can take $\G=\Rn$, $Q=n$, $|x|$ is the usual Euclidean norm of $x$, and
$$\mathcal{R}f(x)=\frac{x}{|x|}\cdot\nabla f(x),\quad x\in\Rn,$$ is the usual radial derivative.
For general homogeneous groups the radial derivative operator $\mathcal{R}$ is explained in detail in formulae \eqref{dfdr0}-\eqref{dfdr}.
We can also refer to \cite{RSY17} for another type of weighted Hardy inequalities, the so-called Caffarelli-Kohn-Nirenberg inequalities on homogeneous groups, and to references therein.
\end{itemize}

Let us now list the results on a stratified group $\G$ of homogeneous dimension $Q\geq 3$. Let $X_{1},...,X_{N}$ be left-invariant vector fields giving the first stratum of the Lie algebra of $\G$, $\nabla_{H}=(X_{1},...,X_{N})$ the horizontal gradient, $(x',x^{(2)},\ldots,x^{(r)})\in \mathbb{R}^{N}\times\ldots\times\mathbb{R}^{N_{r}}$ with $N+N_{2}+\ldots+N_{r}=n$, $|\cdot|$ the Euclidean norm on $\mathbb{R}^{N}$, $r$ being the step of $\G$, and $\L$ the sub-Laplacian operator
$$\L=\sum_{k=1}^{N}X_{k}^{2}.$$
\begin{itemize}
\item Let $\mathbb{G}$ be a stratified group with $N\geq2$ being the dimension of the first stratum, and let $\alpha, \beta\in \mathbb{R}$. Then for all complex-valued functions $f\in C_{0}^{\infty}(\G\backslash\{x'=0\})$ we have
\begin{equation}\label{EQ:strat_ab}
\begin{split}
& \|\mathcal{L} f\|^{2}_{L^{2}(\G)} \\
&\geq (\alpha(N-2)-2\beta)\left\|\frac{\nabla_{H} f}{|x'|}\right\|^{2}_{L^{2}(\G)}
-\alpha^{2}\left\|\frac{x'\cdot\nabla_{H} f}{|x'|^{2}}\right\|^{2}_{L^{2}(\G)}\\
&+(\alpha(N-4)(N-2)-\alpha^{2}(N-2)+2\beta(4-N)-\beta^{2}+\alpha\beta(N-2))
\left\|\frac{f}{|x'|^{2}}\right\|^{2}_{L^{2}(\G)}.
\end{split}
\end{equation}
\item Let $\mathbb{G}$ be a stratified group with $N\geq3$ being the dimension of the first stratum. Then for all complex-valued functions $f\in C_{0}^{\infty}(\mathbb{G}\backslash\{x'=0\})$ we have the refined Hardy inequality
\begin{equation}\label{EQ:istrt-r}
 \left\|\frac{x'\cdot\nabla_{H}f}{|x'|}\right\|_{L^{2}(\G)}\geq \frac{N-2}{2}\left\|\frac{f}{|x'|}\right\|_{L^{2}(\G)},
\end{equation}
where the constant $\frac{N-2}{2}$ is sharp.
\end{itemize}
In particular, by the Cauchy-Schwartz inequality \eqref{EQ:istrt-r} implies the Hardy inequality
\begin{equation}\label{EQ:istrt-r2}
 \left\|\nabla_{H}f\right\|_{L^{2}(\G)}\geq \frac{N-2}{2}\left\|\frac{f}{|x'|}\right\|_{L^{2}(\G)},
\end{equation}
where the constant $\frac{N-2}{2}$ is also sharp. In fact, the sharpness of this constant was the nature of the Badiale-Tarantello conjecture \cite[Remark 2.3]{BadTar:ARMA-2002}. This conjecture was solved in \cite{Secchi-Smets-Willen} and further improvements were obtained in \cite{RS17a}.
In Theorem \ref{Hardy_strat_thm} we obtain yet another proof of \eqref{EQ:istrt-r2} showing that the factorization method gives an elementary half-page proof for it.

On stratified group, we note that the weighted Hardy-Rellich type inequalities, namely for $\left\|\frac{\L f}{|x'|^{a}}\right\|^{2}_{L^{2}(\G)}\left\|\frac{\nabla_{H} f}{|x'|^{b}}\right\|^{2}_{L^{2}(\G)}$ with $a,b\in\mathbb{R}$ instead of $\|\mathcal{L} f\|^{2}_{L^{2}(\G)}$ in \eqref{EQ:strat_ab}, were obtained in \cite{SS17}.

If we use the $\L$-gauge in \eqref{EQ:istrt-r2} instead of the norm $|x'|$, different versions of the Hardy inequality have been actively investigated on stratified groups, we refer to
\cite{DGP-Hardy-potanal}, \cite{GL}, \cite{GolKom}, \cite{Grillo:Hardy-Rellich-PA-2003}, \cite{Jin-Shen:Hardy-Rellich-AM-2011}, \cite{Kombe-Ozaydin:Hardy-Rellich-mfds}, \cite{Kogoj-Sonner:Hardy-lambda-CVEE-2016}, \cite{Lian:Rellich}, \cite{NZW-Hardy-p} for a partial list of references for this and related problems. For Hardy and Rellich inequalities for H\"ormander's sums of squares of vector fields we refer to \cite{RS17c} and references therein.

We finally state the analogue of the inequality \eqref{Gesztesy_ineq} on the Heisenberg group.
We note that compared to \eqref{Gesztesy_ineq}, the analogous expression of $T^{+}_{\alpha,\beta}T_{\alpha,\beta}$ contains additional terms involving commutators of invariant vector fields -- such terms do not appear in the Abelian Euclidean setting. In the case of general stratified groups, to get inequality \eqref{EQ:strat_ab}, we cancel these commutators by considering the non-negative expression
$T^{+}_{\alpha,\beta}T_{\alpha,\beta}+T_{\alpha,\beta}T^{+}_{\alpha,\beta}$ instead. However, in the case of the Heisenberg group, using explicit knowledge of left-invariant vector fields, we can calculate the inequalities corresponding to both $T^{+}_{\alpha,\beta}T_{\alpha,\beta}$ and $T_{\alpha,\beta}T^{+}_{\alpha,\beta}$, which will be given in \eqref{EQ:Heis1} and \eqref{EQ:Heis11}, respectively.

We briefly recall that the Heisenberg group $\mathbb{H}_{n}$ is a manifold $\mathbb{R}^{2n+1}$ with the group law given by
$$({x}^{(1)},{y}^{(1)},t^{(1)})({x}^{(2)},{y}^{(2)},t^{(2)}):=({x}^{(1)}+{x}^{(2)},{y}^{(1)}+{y}^{(2)},{t}^{(1)}+{t}^{(2)}
+\frac{1}{2}({x}^{(1)}{y}^{(2)}-{x}^{(2)}{y}^{(1)})),$$
for $({x}^{(1)},{y}^{(1)},t^{(1)}), ({x}^{(2)},{y}^{(2)},t^{(2)})\in \mathbb{R}^{n}\times\mathbb{R}^{n}\times\mathbb{R}\sim\mathbb{H}_{n}$, where we denote by ${x}^{(1)}{y}^{(2)}$ and ${x}^{(2)}{y}^{(1)}$ their usual scalar products on $\mathbb{R}^{n}$.
The canonical basis of its Lie algebra $\mathfrak{h}_{n}$ is given by the left-invariant vector fields
\begin{equation}\label{EQ:Heis-nots}
X_{j}=\partial_{x_{j}}-\frac{y_{j}}{2}\partial_{t},\;\;Y_{j}=\partial_{y_{j}}+
\frac{x_{j}}{2}\partial_{t},\;\;j=1,\ldots,n,\;\;T=\partial_{t}.
\end{equation}
The canonical commutation relations of the basis $\{X_{j},Y_{j},T\}$ for $j=1,...,n$ are given by
$$[X_{j},Y_{j}]=T,\quad j=1,\ldots,n,$$
with all the other commutators being zero.
We also note that the Heisenberg Lie algebra $\mathfrak{h}_{n}$ is stratified via $\mathfrak{h}_{n}=V_{1}\oplus V_{2}$, where $V_{1}$ is linearly spanned by the $X_{j}$'s and $Y_{j}$'s, and $V_{2}=\mathbb{R}T$. Therefore, the natural dilations on $\mathfrak{h}_{n}$ are given by
$$D_{r}(X_{j})=rX_{j},\;\;D_{r}(Y_{j})=rY_{j},\;\;D_{r}(T)=r^{2}T,$$
and on $\mathbb{H}_{n}$ by
$$D_{r}(x,y,t)=r(x,y,t)=(rx,ry,r^{2}t),\;\;(x,y,t)\in\mathbb{H}_{n},\;r>0.$$
Consequently, $Q=2n+2$ is the homogeneous degree of the Lebesgue measure $dxdydt$ and the homogeneous dimension of the Heisenberg group $\mathbb{H}_{n}$ as well. The sub-Laplacian is given by
$$\L:=\sum_{j=1}^{n}(X^{2}_{j}+Y^{2}_{j})=\sum_{j=1}^{n}\left(\partial_{x_{j}}-\frac{y_{j}}{2}\partial_{t}\right)^{2}+\left(\partial_{y_{j}}+
\frac{x_{j}}{2}\partial_{t}\right)^{2}.$$
To simplify and unify the following formulations, we denote by $\widetilde{x}'$ the variables of the first stratum:
\begin{equation}\label{heisen_Xi}
\widetilde{x}'_{j}=
\begin{cases} x_{j}, 1\leq j\leq n;\\
y_{j-n}, n+1\leq j\leq 2n,   \end{cases}
\end{equation}
with its dimension being $N=2n$, the norm
\begin{equation}\label{heisen_normi}
|\widetilde{x}'|=\sum_{j=1}^{N}\sqrt{({\widetilde{x}'}_{j})^{2}}=\sqrt{\sum_{j=1}^{n}(x^{2}_{j}+y^{2}_{j})},
\end{equation}
and the horizontal gradient
\begin{equation}\label{heisen_gradi}
\nabla_{H}:=({X}_{1}, \ldots, {X}_{n}, {Y}_{1}, \ldots, {Y}_{n}).
\end{equation}
We can also write
\begin{equation}\label{EQ:Lap2}
\L=\Delta_{\widetilde{x}'}+\frac{|\widetilde{x}'|^2}{4}\partial_t^2+Z\partial_t,
\quad \textrm{ with } Z=\sum_{j=1}^n (x_j\partial_{y_j}-y_j\partial_{x_j}),
\end{equation}
where $\Delta_{\widetilde{x}'}$ is the Euclidean Laplacian with respect to ${\widetilde{x}'}$, and $Z$ is the tangential derivative in the $\widetilde{x}'$-variables.

\begin{itemize}
\item Let $\alpha, \beta\in \mathbb{R}$. Then for all complex-valued functions $f\in C_{0}^{\infty}(\mathbb{H}_{n}\backslash\{\widetilde{x}'=0\})$, we have
\begin{equation}\label{EQ:Heis1}
\begin{split}
\|\mathcal{L} f\|^{2}_{L^{2}(\mathbb{H}_{n})}&\geq ((N-4)\alpha-2\beta)\left\|\frac{\nabla_{H} f}{|\widetilde{x}'|}\right\|^{2}_{L^{2}(\mathbb{H}_{n})}\\
&-\alpha(\alpha-4)\left\|\frac{\widetilde{x}'\cdot\nabla_{H} f}{|\widetilde{x}'|^{2}}\right\|^{2}_{L^{2}(\mathbb{H}_{n})}\\
&+\beta((N-4)(\alpha-2)-\beta)\left\|\frac{f}{|\widetilde{x}'|^{2}}\right\|^{2}_{L^{2}(\mathbb{H}_{n})}\\
&+2\alpha
\left(\frac{Zf}{|\widetilde{x}'|}, \frac{Tf}{|\widetilde{x}'|}\right)_{L^{2}(\mathbb{H}_{n})}
+ \alpha \|Tf\|^2_{L^{2}(\mathbb{H}_{n})}
\end{split}
\end{equation}
and
\begin{equation}\label{EQ:Heis11}
\begin{split}
\|\mathcal{L} f\|^{2}_{L^{2}(\mathbb{H}_{n})}&\geq (N\alpha-2\beta)\left\|\frac{\nabla_{H} f}{|\widetilde{x}'|}\right\|^{2}_{L^{2}(\mathbb{H}_{n})}\\
&-\alpha(\alpha+4)\left\|\frac{\widetilde{x}'\cdot\nabla_{H} f}{|\widetilde{x}'|^{2}}\right\|^{2}_{L^{2}(\mathbb{H}_{n})}\\
&+(2(N-4)(\alpha(N-2)-\beta)-2\alpha^{2}(N-2)+\alpha\beta N-\beta^{2})\left\|\frac{f}{|\widetilde{x}'|^{2}}\right\|^{2}_{L^{2}(\mathbb{H}_{n})}\\
&-2\alpha
\left(\frac{Zf}{|\widetilde{x}'|}, \frac{Tf}{|\widetilde{x}'|}\right)_{L^{2}(\mathbb{H}_{n})}
-\alpha \|Tf\|^2_{L^{2}(\mathbb{H}_{n})}.
\end{split}
\end{equation}
Moreover, we have
\begin{equation}\label{EQ:Heis2}
\begin{split}
\|\mathcal{L} f\|^{2}_{L^{2}(\mathbb{H}_{n})}&\geq ((N-\alpha)\alpha-2\beta)\left\|\frac{\nabla_{H} f}{|\widetilde{x}'|}\right\|^{2}_{L^{2}(\mathbb{H}_{n})}\\
&+\beta((N-4)(\alpha-2)-\beta)\left\|\frac{f}{|\widetilde{x}'|^{2}}\right\|^{2}_{L^{2}(\mathbb{H}_{n})}\\
&+ 2\alpha
\left(\frac{Zf}{|\widetilde{x}'|}, \frac{Tf}{|\widetilde{x}'|}\right)_{L^{2}(\mathbb{H}_{n})}
+ \alpha \|Tf\|^2_{L^{2}(\mathbb{H}_{n})}
\end{split}
\end{equation}
for $\alpha(\alpha-4)\geq0$, and
\begin{equation}\label{EQ:Heis22}
\begin{split}
& \|\mathcal{L} f\|^{2}_{L^{2}(\mathbb{H}_{n})}\\
&\geq (-2\beta+\alpha(N-\alpha)-4\alpha)\left\|\frac{\nabla_{H} f}{|\widetilde{x}'|}\right\|^{2}_{L^{2}(\mathbb{H}_{n})}\\
&+(2(N-4)(\alpha(N-2)-\beta)-2\alpha^{2}(N-2)+\alpha\beta N-\beta^{2})\left\|\frac{f}{|\widetilde{x}'|^{2}}\right\|^{2}_{L^{2}(\mathbb{H}_{n})}\\
&-2\alpha
\left(\frac{Zf}{|\widetilde{x}'|}, \frac{Tf}{|\widetilde{x}'|}\right)_{L^{2}(\mathbb{H}_{n})}
- \alpha \|Tf\|^2_{L^{2}(\mathbb{H}_{n})}
\end{split}
\end{equation}
for $\alpha(\alpha+4)\geq0$.
Comparing \eqref{EQ:Heis2} with Gesztesy and Littlejohn's estimate \eqref{Gesztesy_ineq} we note the appearing two last terms involving the commutators $T=[X_{j},Y_{j}]$ due to the non-commutative structure of the Heisenberg group. There are also other ways of writing these inequalities, see \eqref{heisen_Rellich1-1} and \eqref{heisen_Rellich1-11}.

The term $\left(\frac{Zf}{|\widetilde{x}'|}, \frac{Tf}{|\widetilde{x}'|}\right)_{L^{2}(\mathbb{H}_{n})}$, although appearing to be complex-valued, is actually real-valued for any complex-valued function $f$, see Remark \ref{REM:real}, and we have
\begin{equation}\label{EQ:realterm}
\left(\frac{Zf}{|\widetilde{x}'|}, \frac{Tf}{|\widetilde{x}'|}\right)_{L^{2}(\mathbb{H}_{n})}
=-\sum_{j=1}^n\int_{\mathbb{H}_{n}}\log|\widetilde{x}'|^{2}
{\rm Re}(\partial_{y_j}(f(\widetilde{x}',t))\overline{\partial_{tx_j}f(\widetilde{x}',t)}
)d\widetilde{x}' dt.
\end{equation}

For functions $f(\widetilde{x}',t)=f(\widetilde{x}')$ the last two terms in the above inequalities vanish, while for functions $f(\widetilde{x}',t)=f(|\widetilde{x}'|,t)$ we have $Zf=0$, so that the inner product term involving $Zf$ vanishes.

The estimate \eqref{EQ:Heis2} for $\alpha>0$ and \eqref{EQ:Heis22} for $\alpha<0$ show that $ \|Tf\|_{L^{2}(\mathbb{H}_{n})}$ is also controlled by
$\|\mathcal{L} f\|_{L^{2}(\mathbb{H}_{n})}$, although without the weight, which is natural in view of the homogeneity degrees.
\end{itemize}

In Section \ref{SEC:prelim} we briefly recall the main concepts of general homogeneous groups and stratified groups, and fix the notation. In Section \ref{SEC:Fac_hom} the Hardy inequalities with more general weights on homogeneous groups are proved by factorization of differential expressions. In Section \ref{SEC:Fac_strat} we investigate the Hardy and Hardy-Rellich inequalities on stratified Lie groups using this factorization method. Finally, the Hardy-Rellich inequalities on the Heisenberg group are discussed in Section \ref{SEC:Fac_Heisenberg}.

\section{Preliminaries}
\label{SEC:prelim}

In this section we briefly recall the necessary notation concerning the setting of homogeneous
groups following Folland and Stein \cite{FS-Hardy} as well as a recent treatise \cite{FR16}.
A connected simply connected Lie group $\mathbb G$ is called a {\em homogeneous group} if
its Lie algebra $\mathfrak{g}$ is equipped with a family of the following dilations:
$$D_{\lambda}={\rm Exp}(A \,{\rm ln}\lambda)=\sum_{k=0}^{\infty}
\frac{1}{k!}({\rm ln}(\lambda) A)^{k}.$$
Here $A$ is a diagonalisable positive linear operator on Lie algebra $\mathfrak{g}$,
and every dilation $D_{\lambda}$ satisfies the following
$$\forall X,Y\in \mathfrak{g},\, \lambda>0,\;
[D_{\lambda}X, D_{\lambda}Y]=D_{\lambda}[X,Y],$$
that is, every $D_{\lambda}$ is a morphism of the Lie algebra $\mathfrak{g}$.
Then, in particular, we have
\begin{equation}
|D_{\lambda}(S)|=\lambda^{Q}|S| \quad {\rm and}\quad \int_{\mathbb{G}}f(\lambda x)
dx=\lambda^{-Q}\int_{\mathbb{G}}f(x)dx,
\end{equation}
where $Q := {\rm Tr}\,A$ is a homogeneous dimension of $\mathbb G$.
Here $dx$ is the Haar measure on homogeneous group $\mathbb{G}$ and $|S|$ is the volume of a measurable set $S\subset \mathbb{G}$. We recall that the Haar measure on a homogeneous group $\mathbb{G}$ is the standard Lebesgue measure for $\Rn$ (see, for example \cite[Proposition 1.6.6]{FR16}).

Let $|\cdot|$ be a homogeneous quasi-norm on homogeneous groups $\mathbb G$: it satisfies the usual properties of the norm except that the triangle inequality may hold with a constant $\geq 1$, see \cite[Section 3.1.6]{FR16} for a detailed discussion. Let us now introduce the polar decomposition on homogeneous Lie group, which can be found in \cite{FS-Hardy} and \cite[Section 3.1.7]{FR16}:
there is a positive Borel measure $\sigma$ on the unit quasi-sphere
\begin{equation}\label{EQ:sphere}
\wp:=\{x\in \mathbb{G}:\,|x|=1\},
\end{equation}
so that for all $f\in L^{1}(\mathbb{G})$ we have
\begin{equation}\label{EQ:polar}
\int_{\mathbb{G}}f(x)dx=\int_{0}^{\infty}
\int_{\wp}f(ry)r^{Q-1}d\sigma(y)dr.
\end{equation}
If we fix a basis $\{X_{1},\ldots,X_{n}\}$ of a Lie algebra $\mathfrak{g}$
such that
$$AX_{k}=\nu_{k}X_{k}$$
for every $k$, then the matrix $A$ can be taken to be
$A={\rm diag} (\nu_{1},\ldots,\nu_{n})$.
Then each $X_{k}$ is homogeneous of degree $\nu_{k}$. By a decomposition of ${\exp}_{\mathbb{G}}^{-1}(x)$ in $\mathfrak g$, we define the vector
$$e(x)=(e_{1}(x),\ldots,e_{n}(x))$$
by the formula
$${\exp}_{\mathbb{G}}^{-1}(x)=e(x)\cdot \nabla\equiv\sum_{j=1}^{n}e_{j}(x)X_{j},$$
where $\nabla=(X_{1},\ldots,X_{n})$.
It gives the following equality
$$x={\exp}_{\mathbb{G}}\left(e_{1}(x)X_{1}+\ldots+e_{n}(x)X_{n}\right).$$
By homogeneity and denoting $x=ry,\,y\in \wp,$ we get
$$
e(x)=e(ry)=(r^{\nu_{1}}e_{1}(y),\ldots,r^{\nu_{n}}e_{n}(y)).
$$
So one obtains
\begin{equation}\label{dfdr0}
\frac{d}{d|x|}(f(x))=\frac{d}{dr}(f(ry))=
 \frac{d}{dr}(f({\exp}_{\mathbb{G}}
\left(r^{\nu_{1}}e_{1}(y)X_{1}+\ldots
+r^{\nu_{n}}e_{n}(y)X_{n}\right))).
\end{equation}
Throughout this paper, we use the notation
\begin{equation}\label{EQ:Euler}
\mathcal{R} :=\frac{d}{dr},
\end{equation}
that is,
\begin{equation}\label{dfdr}
	\frac{d}{d|x|}(f(x))=\mathcal{R}f(x), \quad\forall x\in \mathbb G,
\end{equation}
for a homogeneous quasi-norm $|x|$ on the homogeneous group $\mathbb G$.

Now we very briefly recall the necessary notation concerning the setting of stratified groups (or homogeneous Carnot groups), as a special case of homogeneous groups.

The triple $\mathbb{G}=(\mathbb{R}^{n}, \circ, D_{\lambda})$ is called a {\em stratified group} if it satisfies the conditions:
\begin{itemize}
\item For some natural numbers $N=N_{1},N_{2},...,N_{r}$ with $N+N_{2}+\ldots+N_{r}=n$, the following decomposition $\mathbb{R}^{n}=\mathbb{R}^{N}\times\ldots\times\mathbb{R}^{N_{r}}$ is valid, and for each $\lambda>0$ the dilation $D_{\lambda}:\mathbb{R}^{n}\rightarrow\mathbb{R}^{n}$ is defined by
    $$D_{\lambda}(x)=D_{\lambda}(x',x^{(2)},\ldots,x^{(r)}):=
    (\lambda x', \lambda^{2}x^{(2)},\ldots,\lambda^{r}x^{(r)})$$ is an automorphism of the stratified group $\mathbb{G}$. Here $x'\equiv x^{(1)}\in\mathbb{R}^{N}$ and $x^{(k)}\in\mathbb{R}^{N_{k}}$ for $k=2,\ldots,r$.
\item Let $N$ be as in above and let $X_{1}, \ldots, X_{N}$ be the left invariant vector fields on stratified group $\mathbb{G}$ such that $X_{k}(0)=\frac{\partial}{\partial x_{k}}|_{0}$ for $k=1, \ldots, N$. Then
    $${\rm rank}({\rm Lie}\{X_{1}, \ldots, X_{N}\})=n,$$
for each $x\in\mathbb{R}^{n}$, that is, the iterated commutators of $X_{1}, \ldots, X_{N}$ span the Lie algebra of stratified group $\mathbb{G}$.
\end{itemize}

Note that the left invariant vector fields $X_{1}, \ldots, X_{N}$ are called the (Jacobian) generators of the stratified group $\mathbb{G}$ and $r$ is called a step of this stratified group $\mathbb{G}$.
For the expressions for left invariant vector fields on $\G$ in terms of the usual (Euclidean) derivatives and further properties see e.g. \cite[Section 3.1.5]{FR16}.

The homogeneous dimension $Q$ of the stratified group $\mathbb{G}$ is then given by
$$Q=\sum_{k=1}^{r}kN_{k}, \;\; N_{1}=N.$$
The differential operator
\begin{equation}\label{L}
\L=\sum_{k=1}^{N}X_{k}^{2},
\end{equation}
is called the (canonical) sub-Laplacian on the stratified group $\G$. This sub-Laplacian $\L$ is a left invariant homogeneous of order 2 hypoelliptic differential operator and it is known that $\L$ is elliptic if and only if $r=1$. The left invariant vector fields $X_{j}$ have an explicit form and satisfy the divergence theorem, which can be found in \cite[Section 3.1.5]{FR16} and \cite{RS17b}:
\begin{equation}\label{Xk}
X_{k}=\frac{\partial}{\partial x'_{k}}+\sum_{\ell=2}^{r}\sum_{m=1}^{N_{1}}a_{k,m}^{(\ell)}
(x', \ldots, x^{\ell-1})\frac{\partial}{\partial x_{m}^{(\ell)}}.
\end{equation}
We will also use the following notations:
$$\nabla_{H}:=(X_{1}, \ldots, X_{N})$$
for the horizontal gradient,
$${\rm div}_{H}\upsilon:=\nabla_{H}\cdot \upsilon$$
for the horizontal divergence, and
$$|x'|=\sqrt{x_{1}^{'2}+\ldots+x_{N}^{'2}}$$
for the Euclidean norm on $\mathbb{R}^{N}$.

Since we have the explicit representation of the left invariant vector fields $X_{j}$ in \eqref{Xk}, one can readily obtain the identities
\begin{equation}\label{formula1}
|\nabla_{H}|x'|^{\gamma}|=\gamma|x'|^{\gamma-1},
\end{equation}
and
\begin{equation}\label{formula2}
{\rm div}_{H}\left(\frac{x'}{|x'|^{\gamma}}\right)=\frac{\sum_{j=1}^{N}|x'|^{\gamma}X_{j}x'_{j}-
\sum_{j=1}^{N}x'_{j}\gamma|x'|^{\gamma-1}X_{j}|x'|}{|x'|^{2\gamma}}=\frac{N-\gamma}{|x'|^{\gamma}},
\end{equation}
for all $\gamma \in \mathbb{R}$, $x'\neq0$.

\section{Factorizations and weighted Hardy inequalities on homogeneous groups}
\label{SEC:Fac_hom}

In this section, using the factorization of differential expressions, we obtain Hardy type inequalities with general weights $\phi(x)$ and $\psi(x)$, which are real-valued functions in $L^2_{loc}(\G\backslash\{0\})$, with their radial derivatives also in $L^2_{loc}(\G\backslash\{0\})$.

The obtained results are new already in the standard setting of $\Rn$, but the proof below works for general homogeneous groups, in particular including $\Rn$ with both isotropic and anisotropic structures, as well as general stratified and graded Lie groups.

\begin{thm}\label{Fac_hom_thm} Let $\mathbb{G}$ be a homogeneous group
of homogeneous dimension $Q\geq 3$ and let $|\cdot|$ be an arbitrary homogeneous quasi-norm. Let $\phi,\psi\in L_{loc}^{2}(\mathbb{G}\backslash\{0\})$ be any real-valued functions such that $\R \phi,\R \psi\in L_{loc}^{2}(\mathbb{G}\backslash\{0\})$. Let $\alpha\in\mathbb{R}$. Then for all complex-valued functions $f\in C_{0}^{\infty}(\mathbb{G}\backslash\{0\})$ we have
$$\int_{\G}(\phi(x))^{2}|\R f(x)|^{2}dx\geq \alpha \int_{\G}\left(\phi(x)\R \psi(x)+\psi(x) \R \phi(x)\right)|f(x)|^{2}dx$$
\begin{equation}\label{fac_hom1}
+\alpha(Q-1)\int_{\G}\frac{\phi(x)\psi(x)}{|x|}|f(x)|^{2}dx-\alpha^{2}\int_{\G}(\psi(x))^{2}|f(x)|^{2}dx
\end{equation}
and
$$\int_{\G}(\phi(x))^{2}|\R f(x)|^{2}dx\geq \alpha \int_{\G}\left(\psi(x) \R \phi(x)-\phi(x)\R \psi(x)\right)|f(x)|^{2}dx$$
$$
+\alpha(Q-1)\int_{\G}\frac{\phi(x)\psi(x)}{|x|}|f(x)|^{2}dx-\alpha^{2}\int_{\G}(\psi(x))^{2}|f(x)|^{2}dx
$$
$$-
(Q-1)\int_{\G}\frac{(\phi(x))^{2}}{|x|^{2}}|f(x)|^{2}dx+(Q-1)\int_{\G}\frac{\phi(x)\R \phi(x)}{|x|}|f(x)|^{2}dx
$$
\begin{equation}\label{fac_hom1_1}+
\int_{\G}\phi(x)\R^{2}\phi(x)|f(x)|^{2}dx.\end{equation}
\end{thm}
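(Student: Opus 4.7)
The plan is to apply the classical factorization strategy: introduce the first-order operator $T f := \phi\,\R f + \alpha\psi\, f$ on $C_{0}^{\infty}(\G\setminus\{0\})$, compute its formal $L^{2}$-adjoint $T^{+}$ with respect to the Haar measure, and exploit the two nonnegativity statements $\|Tf\|_{L^{2}(\G)}^{2}\geq 0$ and $\|T^{+}f\|_{L^{2}(\G)}^{2}\geq 0$ to obtain \eqref{fac_hom1} and \eqref{fac_hom1_1}, respectively. The adjoint of $\R$ is obtained by a one-dimensional integration by parts in $r$ under the polar decomposition \eqref{EQ:polar}, yielding $\R^{+}=-\R-(Q-1)/|x|$. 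Since $\phi,\psi$ are real-valued, this gives
\[ T^{+} f = -\phi\,\R f + A f, \qquad A := -\R\phi - \frac{(Q-1)\phi}{|x|} + \alpha\psi. \]

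For \eqref{fac_hom1} I expand $|Tf|^{2} = \phi^{2}|\R f|^{2} + 2\alpha\,\phi\psi\,\mathrm{Re}((\R f)\,\overline{f}) + \alpha^{2}\psi^{2}|f|^{2}$ and use the identity $2\,\mathrm{Re}((\R f)\overline{f}) = \R(|f|^{2})$, turning the mixed term into $\alpha\int_{\G}\phi\psi\,\R(|f|^{2})\,dx$. Integrating by parts against $\R$ via \eqref{EQ:polar} transforms this expression into $-\alpha\int_{\G}(\phi\,\R\psi + \psi\,\R\phi)|f|^{2}\,dx - \alpha(Q-1)\int_{\G}\phi\psi|f|^{2}/|x|\,dx$, and rearranging $\|Tf\|_{L^{2}(\G)}^{2}\geq 0$ produces \eqref{fac_hom1}.

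For \eqref{fac_hom1_1} I expand $|T^{+}f|^{2} = \phi^{2}|\R f|^{2} - 2\phi A\,\mathrm{Re}((\R f)\overline{f}) + A^{2}|f|^{2}$ and treat the cross term by the same identity; it integrates by parts into $\int_{\G}[(\R\phi)A + \phi\,\R A + (Q-1)\phi A/|x|]|f|^{2}\,dx$. Combined with $\int A^{2}|f|^{2}\,dx$, the summands $(\R\phi)^{2}$ and $(Q-1)^{2}\phi^{2}/|x|^{2}$ cancel, the differentiation of the $-\R\phi$ piece of $A$ produces the $-\phi\,\R^{2}\phi$ contribution, and the remaining geometric terms collapse to $(Q-1)(\phi^{2}/|x|^{2}-\phi\R\phi/|x|)$, while the $\alpha$-linear cross terms reduce to $\alpha(\psi\,\R\phi - \phi\,\R\psi) + \alpha(Q-1)\phi\psi/|x|$; collecting signs, $\|T^{+}f\|_{L^{2}(\G)}^{2}\geq 0$ delivers \eqref{fac_hom1_1}.

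The main obstacle is combinatorial rather than analytical: the expansion for $\|T^{+}f\|^{2}$ contains several terms of the same homogeneity in $\phi,\psi$ and their radial derivatives, and one must verify these cancellations carefully, tracking in particular that each integration by parts against $\R$ contributes exactly one factor of $(Q-1)/|x|$ from the polar Haar measure. The boundary-free integration by parts is immediate from $f\in C_{0}^{\infty}(\G\setminus\{0\})$ and the local $L^{2}$ hypotheses on $\phi,\psi,\R\phi,\R\psi$.
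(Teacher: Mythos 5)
Your proposal is correct and follows essentially the same route as the paper: the same factorization $T_{\alpha}=\phi\,\R+\alpha\psi$, the same formal adjoint $T_{\alpha}^{+}=-\phi\,\R-\R\phi-(Q-1)\phi/|x|+\alpha\psi$ obtained from the polar decomposition, and the two nonnegativity statements $\|T_{\alpha}f\|_{L^{2}(\G)}^{2}\geq0$ and $\|T_{\alpha}^{+}f\|_{L^{2}(\G)}^{2}\geq0$. The only (harmless) difference is organizational: you expand the squares $|T_{\alpha}f|^{2}$ and $|T_{\alpha}^{+}f|^{2}$ directly and integrate the single cross term by parts, whereas the paper first forms the second-order operators $T_{\alpha}^{+}T_{\alpha}$ and $T_{\alpha}T_{\alpha}^{+}$ and then integrates each resulting term against $f$ (the terms $I_{1},\dots,I_{6}$), arriving at the same cancellations.
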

From these we can get different weighted Hardy inequalities. For example, in the most physical situation with $\phi(x)\equiv 1$ we obtain for $\alpha\in\mathbb{R}$ and for any $f\in C_{0}^{\infty}(\mathbb{G}\backslash\{0\})$ the inequalities
$$\int_{\G}|\R f(x)|^{2}dx\geq \int_{\G}\left(\alpha \R \phi(x)+\alpha(Q-1)\frac{\psi(x)}{|x|}-\alpha^{2}(\psi(x))^{2}\right)|f(x)|^{2}dx$$
and
$$\int_{\G}|\R f(x)|^{2}dx\geq \int_{\G}\left(-\alpha \R \phi(x)+\alpha(Q-1)\frac{\psi(x)}{|x|}-\alpha^{2}(\psi(x))^{2}-\frac{Q-1}{|x|^{2}}\right)|f(x)|^{2}dx.
$$
\begin{rem} If $\phi(x)=|x|^{-a}$ and $\psi(x)=|x|^{-b}$ for $a,b\in\mathbb{R}$, then \eqref{fac_hom1} implies that
\begin{equation}\label{case1}
\int_{\G}\frac{|\R f(x)|^{2}}{|x|^{2a}}dx\geq \alpha(Q-a-b-1)\int_{\G}\frac{|f(x)|^{2}}{|x|^{a+b+1}}dx-\alpha^{2}\int_{\G}\frac{|f(x)|^{2}}{|x|^{2b}}dx.
\end{equation}
In the case $b=a+1$, we have
$$
\int_{\G}\frac{|\R f(x)|^{2}}{|x|^{2a}}dx\geq (\alpha(Q-2a-2)-\alpha^{2})\int_{\G}\frac{|f(x)|^{2}}{|x|^{2a+2}}dx. $$
Then, by maximising the constant $(\alpha(Q-2a-2)-\alpha^{2})$ with respect to $\alpha$ we obtain the weighted Hardy inequality on homogeneous groups
\begin{equation}\label{fac_hom2}
\int_{\G}\frac{|\R f(x)|^{2}}{|x|^{2a}}dx\geq \frac{(Q-2a-2)^{2}}{4}\int_{\G}\frac{|f(x)|^{2}}{|x|^{2a+2}}dx.
\end{equation}
It is known that the constant in \eqref{fac_hom2} is sharp (see \cite[Corollary 4.2]{RS16} and also \cite[Theorem 3.4]{RSY16}).

Thus, in the case $b=a+1$ and $\alpha=\frac{Q-2a-2}{2}$, we see that \eqref{case1} gives \eqref{fac_hom2}.
\end{rem}
\begin{rem} If $\phi(x)=|x|^{-a}(\log|x|)^{c}$ and $\psi(x)=|x|^{-b}(\log|x|)^{d}$ for $a,b,c,d\in\mathbb{R}$, then we obtain from \eqref{fac_hom1} the inequality
$$
\int_{\G}\frac{(\log|x|)^{2c}}{|x|^{2a}}|\R f(x)|^{2}dx$$$$\geq
\alpha\int_{\G}\left(\frac{(c+d)(\log|x|)^{c+d-1}+(Q-1-a-b)(\log|x|)^{c+d}}{|x|^{a+b+1}}\right)|f(x)|^{2}dx$$$$-\alpha^{2}
\int_{\G}\frac{(\log|x|)^{2d}}{|x|^{2b}}|f(x)|^{2}dx.$$
When $a=\frac{Q-2}{2}$, $b=\frac{Q}{2}$, $c=1$ and $d=0$, it follows that
\begin{equation}\label{case2}
\int_{\G}\frac{(\log|x|)^{2}}{|x|^{Q-2}}|\R f(x)|^{2}dx\geq
(\alpha-\alpha^{2})\int_{\G}\frac{|f(x)|^{2}}{|x|^{Q}}dx,
\end{equation}
which after maximising the above constant with respect to $\alpha$ again, we obtain the critical Hardy inequality
\begin{equation}\label{fac_hom3}
\int_{\G}\frac{(\log|x|)^{2}}{|x|^{Q-2}}|\R f(x)|^{2}dx\geq
\frac{1}{4}\int_{\G}\frac{|f(x)|^{2}}{|x|^{Q}}dx.
\end{equation}
The sharpness of the constant in \eqref{fac_hom3} is proved in \cite[Theorem 3.4]{RSY16}). So, we note that \eqref{case2} gives \eqref{fac_hom3} when $\alpha=\frac{1}{2}$.
\end{rem}
\begin{rem} On Carnot groups, we can refer to the recent work \cite{YKG17} for the Hardy inequalities with a pair of nonnegative weight
functions. However, we note that in Theorem \ref{Fac_hom_thm} there are no restrictions on the weights while in \cite{YKG17} the weights have to satisfy certain relations for the weighted estimate to hold true.
\end{rem}
\begin{proof}[Proof of Theorem \ref{Fac_hom_thm}] Let us introduce one-parameter differential expression
$$T_{\alpha}:=\phi(x)\R+\alpha\psi(x).$$
Let us calculate a formal adjoint operator of $T_{\alpha}$ on $C_{0}^{\infty}(\mathbb{G}\backslash\{0\})$:
$$\int_{\G}\phi(x)\R f(x)\overline{g(x)}dx+\alpha\int_{\G}\psi(x)f(x)\overline{g(x)}dx$$
$$=\int_{0}^{\infty}\int_{\wp}\phi(ry)\frac{d}{dr}(f(ry))\overline{g(ry)}r^{Q-1}d\sigma(y)dr+
\alpha\int_{\G}\psi(x)f(x)\overline{g(x)}dx$$
$$=-\int_{0}^{\infty}\int_{\wp}\phi(ry)f(ry)\overline{\frac{d}{dr}(g(ry))}r^{Q-1}d\sigma(y)dr$$
$$-(Q-1)\int_{0}^{\infty}\int_{\wp}\phi(ry)f(ry)\overline{g(ry)}r^{Q-2}d\sigma(y)dr$$
$$-\int_{0}^{\infty}\int_{\wp}\frac{d}{dr}(\phi(ry))f(ry)\overline{g(ry)}r^{Q-1}d\sigma(y)dr$$
$$+\alpha\int_{\G}\psi(x)f(x)\overline{g(x)}dx$$
$$=\int_{\G}f(x)\left(-\phi(x)\overline{\R g(x)}\right)dx-(Q-1)\int_{\G}f(x)\left(\frac{\phi(x)}{|x|}\overline{g(x)}\right)dx$$
$$-\int_{\G}f(x)\left(\R \phi(x)\overline{g(x)}\right)dx+\alpha\int_{\G}f(x)\left(\psi(x)\overline{g(x)}\right)dx.$$
Thus, the formal adjoint operator of $T_{\alpha}$ has the following form
$$T_{\alpha}^{+}:=-\phi(x)\R-\frac{Q-1}{|x|}\phi(x)-\R \phi(x)+\alpha \psi(x),$$
where $x\neq0$.
Then we have
$$(T_{\alpha}^{+}T_{\alpha}f)(x)=-\phi(x)\R (\phi(x)\R f(x))-\alpha\phi(x)\R (f(x)\psi(x))-\frac{Q-1}{|x|}(\phi(x))^{2}\R f(x)$$
$$-\frac{\alpha(Q-1)}{|x|}\phi(x)\psi(x)f(x)-\phi(x)\R \phi(x)\R f(x)-\alpha\psi(x)f(x)\R \phi(x)$$$$+\alpha\phi(x)\psi(x)\R f(x)+\alpha^{2}(\psi(x))^{2}f(x).$$
By the nonnegativity of $T_{\alpha}^{+}T_{\alpha}$, introducing polar coordinates $(r,y)=(|x|, \frac{x}{\mid x\mid})\in (0,\infty)\times\wp$ on $\mathbb{G}$, where $\wp$ is the quasi-sphere as in \eqref{EQ:sphere}, and using \eqref{EQ:polar}, one calculates
$$0\leq \int_{\G}|(T_{\alpha}f)(x)|^{2}dx=
\int_{\G}f(x)\overline{(T_{\alpha}^{+}T_{\alpha}f)(x)}dx$$
\begin{equation}\label{fac_hom4}
={\rm Re}\int_{\G}f(x)\overline{(T_{\alpha}^{+}T_{\alpha}f)(x)}dx
=I_{1}+I_{2}+I_{3}+I_{4}+I_{5}+I_{6},
\end{equation}
where
$$I_{1}=-{\rm Re}\int_{0}^{\infty}\int_{\wp}f(ry)\phi(ry)\overline{\frac{d}{dr}\left(\phi(ry)\frac{d}{dr} (f(ry))\right)}r^{Q-1}d\sigma(y)dr,$$
$$I_{2}=-\alpha{\rm Re}\int_{0}^{\infty}\int_{\wp}f(ry)\phi(ry)\overline{\frac{d}{dr}(f(ry)\psi(ry))}r^{Q-1}d\sigma(y)dr,$$
$$I_{3}=-(Q-1){\rm Re}\int_{0}^{\infty}\int_{\wp}f(ry)\frac{(\phi(ry))^{2}\overline{\frac{d}{dr} (f(ry))}}{r}r^{Q-1}d\sigma(y)dr,$$
$$I_{4}=-\alpha(Q-1)\int_{\G}\frac{\phi(x)\psi(x)}{|x|}|f(x)|^{2}dx-\alpha\int_{\G}\R \phi(x) \psi(x)|f(x)|^{2}dx$$
\begin{equation}\label{fac_hom_I4}
+\alpha^{2}\int_{\G}(\psi(x))^{2}|f(x)|^{2}dx,
\end{equation}
$$I_{5}=-{\rm Re}\int_{0}^{\infty}\int_{\wp}f(ry)\frac{d}{dr}(\phi(ry))\phi(ry)\overline{\frac{d}{dr}(f(ry))} r^{Q-1}d\sigma(y)dr$$
and
$$I_{6}=\alpha{\rm Re}\int_{0}^{\infty}\int_{\wp}f(ry)\phi(ry)\psi(ry)\overline{\frac{d}{dr}(f(ry))} r^{Q-1}d\sigma(y)dr.$$
Now we calculate $I_{1}$, $I_{2}$, $I_{3}$, $I_{5}$ and $I_{6}$. By a direct calculation we obtain
$$I_{1}=(Q-1){\rm Re}\int_{0}^{\infty}\int_{\wp}(\phi(ry))^{2}{f(ry)}\overline{\frac{d}{dr}(f(ry))}r^{Q-2}d\sigma(y)dr$$
$$+{\rm Re}\int_{0}^{\infty}\int_{\wp}(\phi(ry))^{2}\frac{d}{dr}(f(ry))\overline{\frac{d}{dr}(f(ry))}r^{Q-1}d\sigma(y)dr$$
$$+{\rm Re}\int_{0}^{\infty}\int_{\wp}\phi(ry)\frac{d}{dr}(\phi(ry))\overline{\frac{d}{dr}f(ry)}f(ry)r^{Q-1}d\sigma(y)dr$$
$$=\frac{Q-1}{2}\int_{0}^{\infty}\int_{\wp}(\phi(ry))^{2}\frac{d}{dr}|f(ry)|^{2}r^{Q-2}d\sigma(y)dr$$
$$+\int_{0}^{\infty}\int_{\wp}(\phi(ry))^{2}\left|\frac{d}{dr}(f(ry))\right|^{2}r^{Q-1}d\sigma(y)dr$$
$$+\frac{1}{2}\int_{0}^{\infty}\int_{\wp}\phi(ry)\frac{d}{dr}(\phi(ry))\frac{d}{dr}|f(ry)|^{2}r^{Q-1}d\sigma(y)dr$$
$$=\int_{0}^{\infty}\int_{\wp}(\phi(ry))^{2}\left|\frac{d}{dr}(f(ry))\right|^{2}r^{Q-1}d\sigma(y)dr$$
$$-\frac{(Q-1)(Q-2)}{2}\int_{0}^{\infty}\int_{\wp}(\phi(ry))^{2}|f(ry)|^{2}r^{Q-3}d\sigma(y)dr$$
$$-(Q-1)\int_{0}^{\infty}\int_{\wp}\phi(ry)\frac{d}{dr}(\phi(ry))|f(ry)|^{2}r^{Q-2}d\sigma(y)dr$$
$$-\frac{1}{2}\int_{0}^{\infty}\int_{\wp}\left(\frac{d}{dr}(\phi(ry))\right)^{2}|f(ry)|^{2}r^{Q-1}d\sigma(y)dr$$
$$-\frac{1}{2}\int_{0}^{\infty}\int_{\wp}\phi(ry)\frac{d^{2}}{dr^{2}}(\phi(ry))|f(ry)|^{2}r^{Q-1}d\sigma(y)dr$$
$$-\frac{Q-1}{2}\int_{0}^{\infty}\int_{\wp}\phi(ry)\frac{d}{dr}(\phi(ry))|f(ry)|^{2}r^{Q-2}d\sigma(y)dr$$
$$=\int_{\G}(\phi(x))^{2}|\R f(x)|^{2}dx-\frac{(Q-1)(Q-2)}{2}\int_{\G}\frac{(\phi(x))^{2}}{|x|^{2}}|f(x)|^{2}dx$$
$$-(Q-1)\int_{\G}\frac{\phi(x)\R \phi(x)}{|x|}|f(x)|^{2}dx
-\frac{1}{2}\int_{\G}(\R \phi(x))^{2}|f(x)|^{2}dx$$
\begin{equation}\label{fac_hom_I1}
-\frac{1}{2}\int_{\G}\R^{2}\phi(x)\phi(x)|f(x)|^{2}dx-\frac{Q-1}{2}\int_{\G}\frac{\phi(x)\R \phi(x)}{|x|}|f(x)|^{2}dx.\end{equation}
Now let us calculate $I_{2}$:
$$I_{2}=-\alpha{\rm Re}\int_{0}^{\infty}\int_{\wp}\phi(ry)\psi(ry)f(ry)\overline{\frac{d}{dr}(f(ry))}r^{Q-1}d\sigma(y)dr$$
$$-\alpha\int_{0}^{\infty}\int_{\wp}\phi(ry)\frac{d}{dr}(\psi(ry))|f(ry)|^{2}r^{Q-1}d\sigma(y)dr$$
$$=-\frac{\alpha}{2}\int_{0}^{\infty}\int_{\wp}\phi(ry)\psi(ry)\frac{d}{dr}|f(ry)|^{2}r^{Q-1}d\sigma(y)dr$$
$$-\alpha\int_{0}^{\infty}\int_{\wp}\phi(ry)\frac{d}{dr}(\psi(ry))|f(ry)|^{2}r^{Q-1}d\sigma(y)dr$$
$$=-\alpha\int_{0}^{\infty}\int_{\wp}\phi(ry)\frac{d}{dr}(\psi(ry))|f(ry)|^{2}r^{Q-1}d\sigma(y)dr$$
$$+\frac{\alpha}{2}\int_{0}^{\infty}\int_{\wp}\psi(ry)\frac{d}{dr}(\phi(ry))|f(ry)|^{2}r^{Q-1}d\sigma(y)dr$$
$$+\frac{\alpha}{2}\int_{0}^{\infty}\int_{\wp}\phi(ry)\frac{d}{dr}(\psi(ry))|f(ry)|^{2}r^{Q-1}d\sigma(y)dr$$
$$+\frac{\alpha(Q-1)}{2}\int_{0}^{\infty}\int_{\wp}\phi(ry)\psi(ry)|f(ry)|^{2}r^{Q-2}d\sigma(y)dr$$
$$=\frac{\alpha}{2}\int_{\G}\psi(x)\R\phi(x)|f(x)|^{2}dx+\frac{\alpha}{2}\int_{\G}\phi(x)\R\psi(x)|f(x)|^{2}dx$$
\begin{equation}\label{fac_hom_I2}
+\frac{\alpha(Q-1)}{2}\int_{\G}\frac{\phi(x)\psi(x)}{|x|}|f(x)|^{2}dx-\alpha\int_{\G}\phi(x)\R \psi(x)|f(x)|^{2}dx.
\end{equation}
For $I_{3}$, one has
$$I_{3}=-(Q-1){\rm Re}\int_{0}^{\infty}\int_{\wp}(\phi(ry))^{2}f(ry)\overline{\frac{d}{dr}(f(ry))}r^{Q-2}d\sigma(y)dr$$
$$=-\frac{Q-1}{2}\int_{0}^{\infty}\int_{\wp}(\phi(ry))^{2}\frac{d}{dr}|f(ry)|^{2}r^{Q-2}d\sigma(y)dr$$
$$=(Q-1)\int_{0}^{\infty}\int_{\wp}\phi(ry)\frac{d}{dr}(\phi(ry))|f(ry)|^{2}r^{Q-2}d\sigma(y)dr$$
$$+\frac{(Q-1)(Q-2)}{2}\int_{0}^{\infty}\int_{\wp}(\phi(ry))^{2}|f(ry)|^{2}r^{Q-3}d\sigma(y)dr$$
\begin{equation}\label{fac_hom_I3}
=(Q-1)\int_{\G}\frac{\phi(x)\R \phi(x)}{|x|}|f(x)|^{2}dx+
\frac{(Q-1)(Q-2)}{2}\int_{\G}\frac{(\phi(x))^{2}}{|x|^{2}}|f(x)|^{2}dx.
\end{equation}
For $I_{5}$, we have
$$I_{5}=-{\rm Re}\int_{0}^{\infty}\int_{\wp}f(ry)\frac{d}{dr}(\phi(ry))\phi(ry)\overline{\frac{d}{dr}(f(ry))} r^{Q-1}d\sigma(y)dr$$
$$=-\frac{1}{2}\int_{0}^{\infty}\int_{\wp}\frac{d}{dr}(\phi(ry)) \phi(ry)\frac{d}{dr}|f(ry)|^{2} r^{Q-1}d\sigma(y)dr$$
$$=\frac{1}{2}\int_{0}^{\infty}\int_{\wp}\phi(ry)\frac{d^{2}}{dr^{2}}(\phi(ry))|f(ry)|^{2}r^{Q-1}d\sigma(y)dr$$
$$+\frac{1}{2}\int_{0}^{\infty}\int_{\wp}\left(\frac{d}{dr}(\phi(ry))\right)^{2}|f(ry)|^{2}r^{Q-1}d\sigma(y)dr$$
$$+\frac{Q-1}{2}\int_{0}^{\infty}\int_{\wp}\frac{\phi(ry) \frac{d}{dr}(\phi(ry))}{r}|f(ry)|^{2}r^{Q-1}d\sigma(y)dr$$
$$=\frac{1}{2}\int_{\G}\R^{2}\phi(x)\phi(x)|f(x)|^{2}dx+\frac{1}{2}\int_{\G}(\R \phi(x))^{2}|f(x)|^{2}dx$$
\begin{equation}\label{fac_hom_I5}
+\frac{Q-1}{2}\int_{\G}\frac{\R \phi(x) \phi(x)}{|x|}|f(x)|^{2}dx.
\end{equation}
Finally, for $I_{6}$ we obtain
$$I_{6}=\alpha{\rm Re}\int_{0}^{\infty}\int_{\wp}f(ry)\phi(ry)\psi(ry)\overline{\frac{d}{dr}(f(ry))} r^{Q-1}d\sigma(y)dr$$
$$=\frac{\alpha}{2}\int_{0}^{\infty}\int_{\wp}\phi(ry)\psi(ry)\frac{d}{dr}|f(ry)|^{2} r^{Q-1}d\sigma(y)dr$$
$$=-\frac{\alpha}{2}\int_{0}^{\infty}\int_{\wp}\frac{d}{dr}(\phi(ry))\psi(ry)|f(ry)|^{2}r^{Q-1}d\sigma(y)dr$$
$$-\frac{\alpha}{2}\int_{0}^{\infty}\int_{\wp}\frac{d}{dr}(\psi(ry))\phi(ry)|f(ry)|^{2}r^{Q-1}d\sigma(y)dr$$
$$-\frac{\alpha(Q-1)}{2}\int_{0}^{\infty}\int_{\wp}\phi(ry)\psi(ry)\frac{|f(ry)|^{2}}{r}r^{Q-1}d\sigma(y)dr$$
$$=-\frac{\alpha}{2}\int_{\G}\R \phi(x) \psi(x)|f(x)|^{2}dx-\frac{\alpha}{2}\int_{\G}\phi(x)\R\psi(x)|f(x)|^{2}dx$$
\begin{equation}\label{fac_hom_I6}-\frac{\alpha(Q-1)}{2}\int_{\G}\frac{\phi(x)\psi(x)}{|x|}|f(x)|^{2}dx.
\end{equation}
Putting \eqref{fac_hom_I4}-\eqref{fac_hom_I6} in \eqref{fac_hom4}, we obtain that
$$\int_{\G}(\phi(x))^{2}|\R f(x)|^{2}dx$$$$-\alpha \int_{\G}\left(\phi(x)\R \psi(x)+\psi(x) \R \phi(x)+(Q-1)\frac{\phi(x)\psi(x)}{|x|}\right)|f(x)|^{2}dx$$
$$
+\alpha^{2}\int_{\G}(\psi(x))^{2}|f(x)|^{2}dx\geq0,$$
which implies \eqref{fac_hom1}.

Thus, we have obtained \eqref{fac_hom1} using the nonnegativity of $T_{\alpha}^{+}T_{\alpha}$. Now we obtain \eqref{fac_hom1_1} using the nonnegativity of $T_{\alpha}T_{\alpha}^{+}$. So, we calculate
$$(T_{\alpha}T_{\alpha}^{+}f)(x)=-\phi(x)\R (\phi(x)\R f(x))+\alpha\phi(x)\R (f(x)\psi(x))-\frac{Q-1}{|x|}(\phi(x))^{2}\R f(x)$$
$$-\frac{\alpha(Q-1)}{|x|}\phi(x)\psi(x)f(x)-\phi(x)\R \phi(x)\R f(x)-\alpha\psi(x)f(x)\R \phi(x)$$$$-\alpha\phi(x)\psi(x)\R f(x)+\alpha^{2}(\psi(x))^{2}f(x)$$
$$-(Q-1)\phi(x)f(x)\R \left(\frac{\phi(x)}{|x|}\right)-\phi(x)f(x)\R^{2}\phi(x).$$
Using the nonnegativity of $T_{\alpha}T_{\alpha}^{+}$, we get
$$0\leq \int_{\G}|(T_{\alpha}f)(x)|^{2}dx=
\int_{\G}f(x)\overline{(T_{\alpha}T^{+}_{\alpha}f)(x)}dx$$
\begin{equation}\label{fac_hom4_1}
={\rm Re}\int_{\G}f(x)\overline{(T_{\alpha}T^{+}_{\alpha}f)(x)}dx
=\widetilde{I}_{1}+\widetilde{I}_{2}+\widetilde{I}_{3}+\widetilde{I}_{4}+\widetilde{I}_{5}+\widetilde{I}_{6},
\end{equation}
where
$$\widetilde{I}_{1}=I_{1}, \widetilde{I}_{2}=-I_{2},\widetilde{I}_{3}=I_{3},\widetilde{I}_{4}=I_{4}+A,\widetilde{I}_{5}=I_{5},\widetilde{I}_{6}=-I_{6}$$
with
$$A=-(Q-1)\int_{\G}\phi(x)|f(x)|^{2}\R \left(\frac{\phi(x)}{|x|}\right)dx-\int_{\G}\phi(x)\R^{2}\phi(x)|f(x)|^{2}dx.$$
Taking into account these and \eqref{fac_hom_I4}-\eqref{fac_hom_I6}, we obtain \eqref{fac_hom1_1}.
\end{proof}
\section{Factorizations and Hardy-Rellich inequalities on stratified groups}
\label{SEC:Fac_strat}
In this section, we obtain Hardy-Rellich and improved Hardy inequalities on stratified groups by factorization. We refer to \cite{G84}, \cite{GP80} and to the recent paper \cite{GL17} for obtaining Hardy and Hardy-Rellich inequalities using this factorization method in the isotropic abelian case.

In this section $\L$ is the sub-Laplacian operator defined by \eqref{L}.

\begin{thm}\label{strat_Rellich_ab} Let $\mathbb{G}$ be a stratified group with $N\geq2$ being the dimension of the first stratum, and let $\alpha, \beta\in \mathbb{R}$. Then for all complex-valued functions $f\in C_{0}^{\infty}(\G\backslash\{x'=0\})$ we have
\begin{equation}\label{strat_Rellich1_ab}
\begin{split}
& \|\mathcal{L} f\|^{2}_{L^{2}(\G)}\\
&\geq (\alpha(N-2)-2\beta)\left\|\frac{\nabla_{H} f}{|x'|}\right\|^{2}_{L^{2}(\G)}\\
&-\alpha^{2}\left\|\frac{x'\cdot\nabla_{H} f}{|x'|^{2}}\right\|^{2}_{L^{2}(\G)}\\
&+(\alpha(N-4)(N-2)-\alpha^{2}(N-2)+2\beta(4-N)-\beta^{2}+\alpha\beta(N-2))
\left\|\frac{f}{|x'|^{2}}\right\|^{2}_{L^{2}(\G)}.
\end{split}
\end{equation}
\end{thm}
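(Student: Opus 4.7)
The plan is to extend the Gesztesy--Littlejohn factorization behind \eqref{Gesztesy_ineq} to the stratified setting by replacing $\triangle$, $\nabla$, $|x|$ with $\mathcal{L}$, $\nabla_H$, $|x'|$. I would introduce the two-parameter differential expression
\begin{equation*}
T_{\alpha,\beta}f := -\mathcal{L}f + \alpha|x'|^{-2}(x'\cdot\nabla_H f) + \beta|x'|^{-2}f,\qquad f\in C_{0}^{\infty}(\G\setminus\{x'=0\}),
\end{equation*}
and, by integrating by parts using the self-adjointness of $\mathcal{L}$ together with the horizontal divergence identity \eqref{formula2} applied to $x'/|x'|^{2}$, derive its formal adjoint
\begin{equation*}
T^{+}_{\alpha,\beta} f = -\mathcal{L}f - \alpha|x'|^{-2}(x'\cdot\nabla_H f) + (\beta-\alpha(N-2))|x'|^{-2}f.
\end{equation*}

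The central observation is that the single nonnegativity $\int_{\G}|T_{\alpha,\beta}f|^{2}dx\ge 0$ used by Gesztesy--Littlejohn in the Abelian case is not convenient here: the cross term $-2\alpha\,\mathrm{Re}\int_{\G}(\mathcal{L}f)\cdot|x'|^{-2}(x'\cdot\overline{\nabla_H f})\,dx$ that appears when squaring $T_{\alpha,\beta}f$ would, upon integration by parts, pull out non-vanishing contributions from the commutators $[X_{k},X_{j}]$ living in higher strata. I therefore propose to use the \emph{symmetric} nonnegativity
\begin{equation*}
0\le \int_{\G}|T_{\alpha,\beta}f|^{2}\,dx+\int_{\G}|T^{+}_{\alpha,\beta}f|^{2}\,dx,
\end{equation*}
in which this offending term appears with opposite signs in the two summands and cancels, precisely because $x'\cdot\nabla_H$ enters $T_{\alpha,\beta}$ and $T^{+}_{\alpha,\beta}$ with opposite sign. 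After this cancellation, only the two cross integrals $\mathrm{Re}\int_{\G}(\mathcal{L}f)|x'|^{-2}\bar f\,dx$ and $\mathrm{Re}\int_{\G}|x'|^{-4}(x'\cdot\nabla_H f)\bar f\,dx$ remain to be evaluated.

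Both integrals reduce to elementary computations that involve no commutators. Integrating $\mathcal{L}=\sum_{k}X_{k}^{2}$ by parts once and using $X_{k}|x'|^{-2}=-2|x'|^{-4}x'_{k}$ (which follows from \eqref{formula1}) one obtains
\begin{equation*}
\mathrm{Re}\int_{\G}(\mathcal{L}f)|x'|^{-2}\bar f\,dx = 2\,\mathrm{Re}\int_{\G}|x'|^{-4}(x'\cdot\nabla_H f)\bar f\,dx - \left\|\tfrac{\nabla_H f}{|x'|}\right\|_{L^{2}(\G)}^{2},
\end{equation*}
while rewriting $2\,\mathrm{Re}[(x'\cdot\nabla_H f)\bar f]=(x'\cdot\nabla_H)|f|^{2}$ and applying \eqref{formula2} with $\gamma=4$ gives
\begin{equation*}
\mathrm{Re}\int_{\G}|x'|^{-4}(x'\cdot\nabla_H f)\bar f\,dx = -\tfrac{N-4}{2}\left\|\tfrac{f}{|x'|^{2}}\right\|_{L^{2}(\G)}^{2}.
\end{equation*}

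Substituting these two identities into the expansion of $\int_{\G}|T_{\alpha,\beta}f|^{2}+\int_{\G}|T^{+}_{\alpha,\beta}f|^{2}$, isolating $2\|\mathcal{L}f\|_{L^{2}(\G)}^{2}$ and dividing by $2$, I expect \eqref{strat_Rellich1_ab} to drop out after collecting coefficients of $\|\nabla_H f/|x'|\|^{2}$, $\|x'\cdot\nabla_H f/|x'|^{2}\|^{2}$, and $\|f/|x'|^{2}\|^{2}$. The main obstacle is purely algebraic bookkeeping: tracking the contributions in $\alpha$, $\beta$, $\alpha^{2}$, $\alpha\beta$, $\beta^{2}$ through the expansion and verifying that the $\|f/|x'|^{2}\|^{2}$ coefficient collapses to the somewhat intricate expression $\alpha(N-4)(N-2)-\alpha^{2}(N-2)+2\beta(4-N)-\beta^{2}+\alpha\beta(N-2)$ stated in the theorem.
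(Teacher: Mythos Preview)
Your proposal is correct and follows essentially the same strategy as the paper: both introduce the pair $T_{\alpha,\beta}$, $T^{+}_{\alpha,\beta}$ exactly as you do and exploit the \emph{symmetrized} nonnegativity $\int_{\G}|T_{\alpha,\beta}f|^{2}+\int_{\G}|T^{+}_{\alpha,\beta}f|^{2}\ge 0$ precisely so that the commutator contributions coming from $\mathrm{Re}\int(\mathcal{L}f)\,|x'|^{-2}(x'\cdot\overline{\nabla_H f})$ cancel. The only organizational difference is that the paper first computes the operator sum $(T^{+}_{\alpha,\beta}T_{\alpha,\beta}+T_{\alpha,\beta}T^{+}_{\alpha,\beta})f$ pointwise and then pairs it with $\bar f$, whereas you expand the two squared norms directly; your route is a bit shorter since it bypasses the explicit simplification of terms like $\mathcal{L}\big(|x'|^{-2}x'\cdot\nabla_H f\big)$, but the underlying mechanism and the two integration-by-parts identities you isolate are exactly those used in the paper.
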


We note that the term $x'\cdot\nabla_{H} f$ can be eliminated.

\begin{cor} Using the Cauchy-Schwarz inequality in \eqref{strat_Rellich1_ab}
\begin{equation}\label{Cauchy-Schwarz}
\int_{\G}\frac{|x'\cdot (\nabla_{H}f)(x)|^{2}}{|x'|^{4}}dx\leq
\int_{\G}\frac{|(\nabla_{H}f)(x)|^{2}}{|x'|^{2}}dx,
\end{equation}
we obtain
\begin{equation}\label{strat_Rellich2_ab}
\begin{split}
& \|\mathcal{L} f\|^{2}_{L^{2}(\G)}\\
&\geq (\alpha(N-2)-2\beta-\alpha^{2})\left\|\frac{\nabla_{H} f}{|x'|}\right\|^{2}_{L^{2}(\G)}\\
&+(\alpha(N-4)(N-2)-\alpha^{2}(N-2)+2\beta(4-N)-\beta^{2}+\alpha\beta(N-2))
\left\|\frac{f}{|x'|^{2}}\right\|^{2}_{L^{2}(\G)}.
\end{split}
\end{equation}
\end{cor}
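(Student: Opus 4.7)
The plan is to derive \eqref{strat_Rellich2_ab} from \eqref{strat_Rellich1_ab} by bounding the single term that contains $x'\cdot\nabla_H f$ and absorbing it into the coefficient of $\|\nabla_H f/|x'|\|_{L^2(\G)}^2$. Since Theorem \ref{strat_Rellich_ab} is already available, no new factorization is needed; the corollary is purely an algebraic consequence of the Cauchy--Schwarz pointwise bound.

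First I would justify the pointwise (and hence integral) inequality \eqref{Cauchy-Schwarz}. Pointwise one has $|x'\cdot (\nabla_H f)(x)|\le |x'|\,|(\nabla_H f)(x)|$ by the standard Cauchy--Schwarz inequality in $\mathbb{R}^N$, so
\[
\frac{|x'\cdot (\nabla_H f)(x)|^{2}}{|x'|^{4}}\le \frac{|(\nabla_H f)(x)|^{2}}{|x'|^{2}},
\]
and integration over $\G$ yields \eqref{Cauchy-Schwarz}.

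Next I would insert this bound into \eqref{strat_Rellich1_ab}. The key observation is a sign one: the coefficient $-\alpha^{2}$ in front of $\|x'\cdot\nabla_H f/|x'|^{2}\|_{L^{2}(\G)}^{2}$ is non-positive, so multiplying \eqref{Cauchy-Schwarz} by $-\alpha^{2}$ \emph{reverses} the inequality and gives
\[
-\alpha^{2}\left\|\frac{x'\cdot\nabla_{H} f}{|x'|^{2}}\right\|^{2}_{L^{2}(\G)}\;\ge\;-\alpha^{2}\left\|\frac{\nabla_{H} f}{|x'|}\right\|^{2}_{L^{2}(\G)}.
\]
Substituting this lower bound into the right-hand side of \eqref{strat_Rellich1_ab} and combining the two coefficients of $\|\nabla_H f/|x'|\|_{L^{2}(\G)}^{2}$ as $(\alpha(N-2)-2\beta)+(-\alpha^{2})=\alpha(N-2)-2\beta-\alpha^{2}$ produces precisely \eqref{strat_Rellich2_ab}; the coefficient of $\|f/|x'|^{2}\|_{L^{2}(\G)}^{2}$ is untouched.

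There is essentially no main obstacle: the only point that demands attention is the direction of the inequality after multiplication by the non-positive scalar $-\alpha^{2}$, which is why the corollary is stated as an inequality valid for all real $\alpha$. No regularity issue arises, as the hypothesis $f\in C_{0}^{\infty}(\G\setminus\{x'=0\})$ guarantees that all integrals in play are finite, so the manipulation is fully justified.
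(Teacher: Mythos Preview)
Your proposal is correct and follows exactly the approach indicated in the paper: the corollary's proof is essentially contained in its own statement, namely applying the Cauchy--Schwarz bound \eqref{Cauchy-Schwarz} to the $-\alpha^{2}$ term in \eqref{strat_Rellich1_ab} and absorbing it into the gradient coefficient. Your attention to the sign of $-\alpha^{2}$ is the only point requiring care, and you handle it correctly.
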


We also record the special case of Theorem \ref{strat_Rellich_ab} in the abelian estting, to contrast it later in Corollary \ref{cor_Rn2} with the Gesztesy-Littlejohn inequality \eqref{Gesztesy_ineq}.

\begin{cor}\label{cor_Rn1} In the abelian case $\G =(\Rn, +)$, we have $N=n$, $\nabla_{H}=\nabla=(\partial_{x_{1}},...,\partial_{x_{n}})$ is the usual (full) gradient, so \eqref{strat_Rellich1_ab} implies for $\alpha,\beta\in \mathbb{R}$ and for any $f\in C_{0}^{\infty}(\Rn\backslash\{0\})$ with $n\geq2$ the inequality
\begin{equation}\label{strat_Rellich1_ab_Rn}
\begin{split}
& \|\triangle f\|^{2}_{L^{2}(\Rn)}\\
&\geq (\alpha(n-2)-2\beta)\left\|\frac{\nabla f}{|x|}\right\|^{2}_{L^{2}(\Rn)}\\
&-\alpha^{2}\left\|\frac{x\cdot\nabla f}{|x|^{2}}\right\|^{2}_{L^{2}(\Rn)}\\
&+(\alpha(n-4)(n-2)-\alpha^{2}(n-2)+2\beta(4-n)-\beta^{2}+\alpha\beta(n-2))
\left\|\frac{f}{|x|^{2}}\right\|^{2}_{L^{2}(\Rn)}.
\end{split}
\end{equation}
\end{cor}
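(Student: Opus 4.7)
The plan is very short: this corollary is a direct specialization of Theorem~\ref{strat_Rellich_ab} to the abelian stratified group $\G=(\mathbb{R}^n,+)$, so the entire work has already been done and the only task is to verify that the dictionary between the general stratified setting and the Euclidean setting identifies the two sides of \eqref{strat_Rellich1_ab} with the two sides of \eqref{strat_Rellich1_ab_Rn}.

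Concretely, I would first recall that $(\mathbb{R}^n,+)$ is a stratified group of step $r=1$, with only one stratum $V_1=\mathbb{R}^n$; hence $N=N_1=n$, there are no higher strata, and the homogeneous dimension equals $n$. A canonical basis of left-invariant vector fields generating $V_1$ is $X_j=\partial_{x_j}$, so by definition $\nabla_H=(X_1,\dots,X_N)=(\partial_{x_1},\dots,\partial_{x_n})=\nabla$ and
\[
\L=\sum_{k=1}^N X_k^2=\sum_{k=1}^n \partial_{x_k}^2=\triangle.
\]
Since there is no higher-stratum component, one has $x=x'\in\mathbb{R}^N=\mathbb{R}^n$, so $|x'|=|x|$ is the usual Euclidean norm and the admissibility condition $f\in C_0^\infty(\G\setminus\{x'=0\})$ reduces to $f\in C_0^\infty(\mathbb{R}^n\setminus\{0\})$.

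With these identifications, the hypothesis $N\ge 2$ of Theorem~\ref{strat_Rellich_ab} becomes the stated hypothesis $n\ge 2$, and every term in \eqref{strat_Rellich1_ab} is transcribed verbatim into the corresponding term in \eqref{strat_Rellich1_ab_Rn}: the left-hand side $\|\L f\|_{L^2(\G)}^2$ becomes $\|\triangle f\|_{L^2(\mathbb{R}^n)}^2$, and on the right-hand side the factors $\nabla_H f/|x'|$, $x'\cdot\nabla_H f/|x'|^2$, and $f/|x'|^2$ become $\nabla f/|x|$, $x\cdot\nabla f/|x|^2$, and $f/|x|^2$, respectively, while the numerical coefficients (which depend only on $\alpha,\beta,N$) are unchanged after substituting $N=n$. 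Thus \eqref{strat_Rellich1_ab_Rn} follows at once by invoking Theorem~\ref{strat_Rellich_ab} with this data. There is no real obstacle; the only point requiring a line of comment is the identification $\L=\triangle$ and $\nabla_H=\nabla$ in the step-one case, which is standard and was already recorded in the preliminaries.
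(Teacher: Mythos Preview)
Your proposal is correct and matches the paper's approach exactly: the corollary is stated in the paper without a separate proof, since it is an immediate specialization of Theorem~\ref{strat_Rellich_ab} to the step-one abelian group $(\mathbb{R}^n,+)$, with the identifications $N=n$, $\nabla_H=\nabla$, $\L=\triangle$, and $|x'|=|x|$ that you spell out. There is nothing to add.
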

\begin{proof}[Proof of Theorem \ref{strat_Rellich_ab}]
We will be applying the factorization method with the following differential expressions for two real parameters $\alpha,\beta\in\mathbb R$,
\begin{equation}\label{dif_exp1_ab}
T_{\alpha, \beta}:=-\mathcal{L}+\alpha\frac{x'\cdot\nabla_{H}}{|x'|^{2}}+\frac{\beta}{|x'|^{2}}
\end{equation}
and its formal adjoint
\begin{equation}\label{dif_exp2_ab}T^{+}_{\alpha,\beta}:=-\mathcal{L}-\alpha\frac{x'\cdot\nabla_{H}}{|x'|^{2}}
-\frac{\alpha(N-2)-\beta}{|x'|^{2}},
\end{equation}
where $x'\neq0$. Then, by a direct calculation for any function $f\in C_{0}^{\infty}(\G\backslash\{x'=0\})$ we have
\begin{equation}\label{strat_TT0_ab}
\begin{aligned}
& (T^{+}_{\alpha,\beta}T_{\alpha,\beta}f)(x) \\
& =\left(-\mathcal{L}-\alpha\frac{x'\cdot\nabla_{H}}{|x'|^{2}}
-\frac{\alpha(N-2)-\beta}{|x'|^{2}}\right)\left(-(\mathcal{L}f)(x)
+\alpha\frac{x'\cdot(\nabla_{H}f)}{|x'|^{2}}+
\frac{\beta f(x)}{|x'|^{2}}\right) \\
& =(\L^{2}f)(x)+\alpha\left(-\L\left(\frac{x'\cdot(\nabla_{H}f)}{|x'|^{2}}\right)(x)+\frac{x'\cdot\nabla_{H}}{|x'|^{2}}(\L f)(x)+
\frac{N-2}{|x'|^{2}}(\L f)(x)\right) \\
& \quad +\beta\left(-\L\left(\frac{f}{|x'|^{2}}\right)(x)-\frac{(\L f)(x)}{|x'|^{2}}\right) \\
& \quad +\alpha\beta\left(-\frac{x'\cdot\nabla_{H}}{|x'|^{2}}\left(\frac{f}{|x'|^{2}}\right)(x)+
\frac{x'\cdot(\nabla_{H}f)(x)}{|x'|^{4}}-\frac{(N-2)f(x)}{|x'|^{4}}\right)
\\
& \; +\alpha^{2}\left(-\left(\frac{x'\cdot\nabla_{H}}{|x'|^{2}}\right)
\left(\frac{x'\cdot(\nabla_{H}f)}{|x'|^{2}}\right)(x)-
(N-2)\frac{x'\cdot(\nabla_{H}f)(x)}{|x'|^{4}}\right)+\beta^{2}\frac{f(x)}{|x'|^{4}}.
\end{aligned}
\end{equation}
Now we calculate
\begin{equation}\label{strat_TT*0_ab}
\begin{aligned}
& (T_{\alpha,\beta}T^{+}_{\alpha,\beta}f)(x) \\
& =\left(-\mathcal{L}
+\alpha\frac{x'\cdot\nabla_{H}}{|x'|^{2}}+
\frac{\beta }{|x'|^{2}}\right)\left(-(\mathcal{L}f)(x)-\alpha\frac{x'\cdot(\nabla_{H}f)(x)}{|x'|^{2}}
-\frac{(\alpha(N-2)-\beta)f(x)}{|x'|^{2}}\right)\\
& =(\L^{2}f)(x)+
\alpha\left(\L\left(\frac{x'\cdot(\nabla_{H}f)}{|x'|^{2}}\right)(x)
-\frac{x'\cdot\nabla_{H}}{|x'|^{2}}(\L f)(x)+
(N-2)\L\left(\frac{f}{|x'|^{2}}\right)(x)\right) \\
& \quad +\beta\left(-\L\left(\frac{f}{|x'|^{2}}\right)(x)-\frac{(\L f)(x)}{|x'|^{2}}\right) \\
& \quad +\alpha\beta\left(\frac{x'\cdot\nabla_{H}}{|x'|^{2}}\left(\frac{f}{|x'|^{2}}\right)(x)-
\frac{x'\cdot(\nabla_{H}f)(x)}{|x'|^{4}}-\frac{(N-2)f(x)}{|x'|^{4}}\right)
\\
& \; +\alpha^{2}\left(-\left(\frac{x'\cdot\nabla_{H}}{|x'|^{2}}\right)
\left(\frac{x'\cdot(\nabla_{H}f)}{|x'|^{2}}\right)(x)-
(N-2)\frac{x'\cdot\nabla_{H}}{|x'|^{2}}\left(\frac{f}{|x'|^{2}}\right)(x)\right)
+\beta^{2}\frac{f(x)}{|x'|^{4}}.
\end{aligned}
\end{equation}
Using
\begin{equation}\label{formula3}
\begin{split}
\L\left(\frac{f}{|x'|^{2}}\right)(x)&=
\sum_{j=1}^{N}X^{2}_{j}\left(\frac{f}{|x'|^{2}}\right)(x)\\&=
\sum_{j=1}^{N}X_{j}\left(\frac{X_{j}f}{|x'|^{2}}
-\frac{2x'_{j}f}{|x'|^{4}}\right)(x)\\&
=\sum_{j=1}^{N}\left(\frac{(X^{2}_{j}f)(x)}{|x'|^{2}}-
\frac{4x'_{j}(X_{j}f)(x)}{|x'|^{4}}
+\frac{8(x'_{j})^{2}f(x)}{|x'|^{6}}
-\frac{2f(x)}{|x'|^{4}}\right)\\&
=\frac{(\L f)(x)}{|x'|^{2}}
-\frac{4x'\cdot(\nabla_{H}f)(x)}{|x'|^{4}}
-(2N-8)\frac{f(x)}{|x'|^{4}}
\end{split}
\end{equation}
and
\begin{equation}\label{formula4}
\begin{split}
\frac{x'\cdot\nabla_{H}}{|x'|^{2}}\left(\frac{f}{|x'|^{2}}\right)(x)
&=\frac{f(x)}{|x'|^{2}}\sum_{j=1}^{N}x'_{j}X_{j}(|x'|^{-2})
+\frac{x'\cdot(\nabla_{H}f)(x)}{|x'|^{4}}
\\&=-2\sum_{j=1}^{N}\frac{(x'_{j})^{2}f(x)}{|x'|^{6}}
+\frac{x'\cdot(\nabla_{H}f)(x)}{|x'|^{4}}
\\&=-2\frac{f(x)}{|x'|^{4}}+\frac{x'\cdot(\nabla_{H}f)(x)}{|x'|^{4}},
\end{split}
\end{equation}
in \eqref{strat_TT*0_ab}, then we have for $(T^{+}_{\alpha,\beta}T_{\alpha,\beta}f)(x)+(T_{\alpha,\beta}T^{+}_{\alpha,\beta}f)(x)$ that
\begin{equation}\label{strat_TT*01_ab}
\begin{split}
& (T^{+}_{\alpha,\beta}T_{\alpha,\beta}f)(x)+(T_{\alpha,\beta}T^{+}_{\alpha,\beta}f)(x) \\
& =2(\L^{2}f)(x)+
2\alpha(N-2)\left(\frac{(\L f)(x)}{|x'|^{2}}-2\frac{x'\cdot(\nabla_{H}f)(x)}{|x'|^{4}}
+(4-N)\frac{f(x)}{|x'|^{4}}\right)\\
& \quad +2\beta\left(-\frac{2(\L f)(x)}{|x'|^{2}}+\frac{4x'\cdot(\nabla_{H}f)(x)}{|x'|^{4}}+(2N-8)\frac{f(x)}{|x'|^{4}}\right) -2\alpha\beta(N-2)\frac{f(x)}{|x'|^{4}}
\\
& \; +2\alpha^{2}\left(-\left(\frac{x'\cdot\nabla_{H}}{|x'|^{2}}\right)
\left(\frac{x'\cdot(\nabla_{H}f)}{|x'|^{2}}\right)(x)-
(N-2)\frac{x'\cdot(\nabla_{H}f)(x)}{|x'|^{4}}+(N-2)\frac{f(x)}{|x'|^{4}}\right)\\
& \;
+2\beta^{2}\frac{f(x)}{|x'|^{4}} \\
& =:2(\L^{2}f)(x)+2\alpha(N-2)\left(\frac{(\L f)(x)}{|x'|^{2}}-2\frac{x'\cdot(\nabla_{H}f)(x)}{|x'|^{4}}
+(4-N)\frac{f(x)}{|x'|^{4}}\right)\\&
-2\alpha\beta(N-2)\frac{f(x)}{|x'|^{4}}+2\beta^{2}\frac{f(x)}{|x'|^{4}}+\beta J_{1}+\alpha^{2}J_{2}.
\end{split}
\end{equation}
In order to simplify this, let us calculate the following
\begin{equation}\label{J2_ab}
\begin{split}
&-2\left(\frac{x'\cdot\nabla_{H}}{|x'|^{2}}\right)
\left(\frac{x'\cdot(\nabla_{H}f)}{|x'|^{2}}\right)(x)-2
(N-2)\frac{x'\cdot(\nabla_{H}f)(x)}{|x'|^{4}}+2(N-2)\frac{f(x)}{|x'|^{4}} \\
& =-\frac{2\sum_{j,k=1}^{N}(x'_{j}X_{j})
(x'_{k}(X_{k}f))(x)}{|x'|^{4}}-2
\sum_{j,k=1}^{N}x'_{j}(-2)|x'|^{-3}X_{j}|x'|
\frac{x'_{k}(X_{k}f)(x)}{|x'|^{2}} \\
& \quad -
2(N-2)\frac{x'\cdot(\nabla_{H}f)(x)}{|x'|^{4}}+2(N-2)\frac{f(x)}{|x'|^{4}}\\
& =-\frac{2\sum_{k=1}^{N}x'_{k}(X_{k}f)(x)}{|x'|^{4}}
-\frac{2\sum_{j,k=1}^{N}x'_{j}x'_{k}X_{j}(X_{k}f)(x)}{|x'|^{4}}
+\frac{4\sum_{k=1}^{N}x'_{k}(X_{k}f)(x)}{|x'|^{4}} \\
& \quad -
2(N-2)\frac{x'\cdot(\nabla_{H}f)(x)}{|x'|^{4}}+2(N-2)\frac{f(x)}{|x'|^{4}} \\
& =-
2(N-3)\frac{x'\cdot(\nabla_{H}f)(x)}{|x'|^{4}}-
\frac{2\sum_{j,k=1}^{N}x'_{j}x'_{k}(X_{j}X_{k}f)(x)}{|x'|^{4}}
+2(N-2)\frac{f(x)}{|x'|^{4}}.
\end{split}
\end{equation}
Now putting this in \eqref{strat_TT*01_ab}, we obtain
\begin{equation}\label{strat_TT1_ab}
\begin{split}
 (T^{+}_{\alpha,\beta}T_{\alpha,\beta}f)(x)+(T_{\alpha,\beta}T^{+}_{\alpha,\beta}f)(x) =&2(\L^{2}f)(x)+(2\alpha(N-2)-4\beta)\frac{(\L f)(x)}{|x'|^{2}}\\
&+(-4\alpha(N-2)-2\alpha^{2}(N-3)+8\beta)\frac{x'\cdot(\nabla_{H} f)(x)}{|x'|^{4}}\\
&+(2\alpha(N-2)(4-N)+2\alpha^{2}(N-2)-2\alpha\beta(N-2)\\&+(4N-16)\beta+2\beta^{2})\frac{f(x)}{|x'|^{4}}\\
&-2\alpha^{2}\frac{\sum_{j,k=1}^{N}x'_{j}x'_{k}(X_{j}X_{k}f)(x)}{|x'|^{4}}.
\end{split}
\end{equation}
Now using the nonnegativity of $T_{\alpha,\beta}^{+}T_{\alpha,\beta}+T_{\alpha,\beta}T^{+}_{\alpha,\beta}$ and integrating by parts, we get
$$\int_{\G}|(T_{\alpha,\beta}f)(x)|^{2}dx+\int_{\G}|(T^{+}_{\alpha,\beta}f)(x)|^{2}dx$$
$$=\int_{\G}\overline{f(x)}((T^{+}_{\alpha,\beta}T_{\alpha,\beta}+T_{\alpha,\beta}T^{+}_{\alpha,\beta})f)(x)dx\geq0.$$
Putting \eqref{strat_TT1_ab} into this inequality, one calculates
\begin{equation}\label{strat_TT_int_part1_ab}
\begin{split}
2\int_{\G}|(\mathcal{L}f)(x)|^{2}dx&+(2\alpha(N-2)-4\beta)\int_{\G}\frac{\overline{f(x)}(\L f)(x)}{|x'|^{2}}dx\\&
+(-4\alpha(N-2)-2\alpha^{2}(N-3)+8\beta)\int_{\G}\frac{\overline{f(x)}(x'\cdot (\nabla_{H}f)(x))}{|x'|^{4}}dx\\
&+(2\alpha(N-2)(4-N)+2\alpha^{2}(N-2)-2\alpha\beta(N-2)\\
&+(4N-16)\beta+2\beta^{2})\int_{\G}\frac{|f(x)|^{2}}{|x'|^{4}}dx\\
&-2\alpha^{2}\sum_{j,k=1}^{N}\int_{\G}\frac{\overline{f(x)}x'_{j}x'_{k}
(X_{j}X_{k}f)(x)}{|x'|^{4}}dx\geq0.
\end{split}
\end{equation}
Using the identities
\begin{equation}\label{strat_TT_int_part2_ab}
\begin{aligned}
\int_{\G}\frac{\overline{f(x)}(\L f)(x)}{|x'|^{2}}dx
& =-\sum_{j=1}^{N}\int_{\G}\frac{\overline{(X_{j}f)(x)}(X_{j}f)(x)}
{|x'|^{2}} \\
& \quad -\sum_{j=1}^{N}\int_{\G}\overline{f(x)}(-2)|x'|^{-3}X_{j}
|x'|(X_{j}f)(x)dx \\
& =2\int_{\G}\frac{\overline{f(x)}(x'\cdot (\nabla_{H}f)(x))}{|x'|^{4}}dx
-\int_{\G}\frac{|(\nabla_{H}f)(x)|^{2}}{|x'|^{2}}dx
\end{aligned}
\end{equation}
and
$$
\sum_{j,k=1}^{N}\int_{\G}\frac{\overline{f(x)}x'_{j}x'_{k}(X_{j}X_{k}f)(x)}{|x'|^{4}}dx$$
$$=-(N-1)\sum_{k=1}^{N}\int_{\G}\frac{\overline{f(x)}x'_{k}(X_{k}f)(x)}{|x'|^{4}}dx
-2\sum_{k=1}^{N}\int_{\G}\frac{\overline{f(x)}x'_{k}(X_{k}f)(x)}{|x'|^{4}}dx$$
$$-\sum_{j,k=1}^{N}\int_{\G}\frac{x'_{j}x'_{k}\overline{(X_{j}f)(x)}
(X_{k}f)(x)}{|x'|^{4}}dx
-\sum_{j,k=1}^{N}\int_{\G}\overline{f(x)}x'_{j}x'_{k}(X_{k}f)(x)(-4)|x'|^{-5}X_{j}|x'|dx$$$$=
-(N+1)\sum_{k=1}^{N}\int_{\G}\frac{\overline{f(x)}x'_{k}(X_{k}f)(x)}{|x'|^{4}}dx
+4\sum_{j,k=1}^{N}\int_{\G}\frac{\overline{f(x)}(x'_{j})^{2}x'_{k}(X_{k}f)(x)}
{|x'|^{6}}dx$$$$
-\int_{\G}\frac{|x'\cdot (\nabla_{H}f)(x)|^{2}}{|x'|^{4}}dx
=-(N-3)\int_{\G}\frac{\overline{f(x)}(x'\cdot (\nabla_{H}f)(x))}{|x'|^{4}}dx$$
\begin{equation}\label{strat_TT_int_part3_ab}
-\int_{\G}\frac{|x'\cdot (\nabla_{H}f)(x)|^{2}}{|x'|^{4}}dx
\end{equation}
in \eqref{strat_TT_int_part1_ab}, we obtain
$$
2\int_{\G}|(\mathcal{L}f)(x)|^{2}dx
+(4\alpha(N-2)-8\beta-4\alpha(N-2)+8\beta-2\alpha^{2}(N-3)+2\alpha^{2}(N-3))$$
$$\times\int_{\G}\frac{\overline{f(x)}(x'\cdot (\nabla_{H}f)(x))}{|x'|^{4}}dx$$
$$+(2\alpha(N-2)(4-N)+2\alpha^{2}(N-2)-2\alpha\beta(N-2)+(4N-16)\beta+2\beta^{2})\int_{\G}\frac{|f(x)|^{2}}{|x'|^{4}}dx$$
$$-(2\alpha(N-2)-4\beta)\int_{\G}\frac{|(\nabla_{H} f)(x)|^{2}}{|x'|^{2}}dx$$$$
+2\alpha^{2}\int_{\G}\frac{|x'\cdot (\nabla_{H}f)(x)|^{2}}{|x'|^{4}}dx\geq0,
$$
which implies \eqref{strat_Rellich1_ab}.
\end{proof}
Now let us give a very elementary proof of a version of the Hardy inequality on stratified groups using the factorization method. We note that this inequality on stratified groups was obtained in \cite{RS17a} by a different method.

\begin{thm}\label{Hardy_strat_thm}
Let $\mathbb{G}$ be a stratified group with $N\geq3$ being the dimension of the first stratum. Let $\alpha\in\mathbb{R}$. Then for all complex-valued functions $f\in C_{0}^{\infty}(\mathbb{G}\backslash\{x'=0\})$ we have
\begin{equation}\label{Hardy_strat}\|\nabla_{H}f\|_{L^{2}(\G)}\geq \frac{N-2}{2}\left\|\frac{f}{|x'|}\right\|_{L^{2}(\G)},
\end{equation}
where the constant $\frac{N-2}{2}$ is sharp.
\end{thm}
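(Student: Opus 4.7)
The plan is to apply the factorization method directly to the first-order vector-valued operator
\[
T_\alpha f := \nabla_H f + \alpha\,\frac{x'}{|x'|^{2}}\,f,
\]
whose $j$-th component is $(T_\alpha f)_j = X_j f + \alpha\,\frac{x'_j}{|x'|^{2}}\,f$, and to exploit the trivial nonnegativity $\|T_\alpha f\|^{2}_{L^{2}(\G)} \geq 0$ together with the divergence identity \eqref{formula2}. This is the one-parameter (gradient-level) analogue of the two-parameter second-order construction used in Theorem \ref{strat_Rellich_ab}, and it is much shorter because no commutators between $\nabla_H$ and $\mathcal{L}$ are needed.

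First I would expand the square, using $\sum_{j=1}^{N}(x'_j)^{2}=|x'|^{2}$, to obtain
\[
0 \leq \|T_\alpha f\|^{2}_{L^{2}(\G)} = \|\nabla_H f\|^{2}_{L^{2}(\G)} + 2\alpha\,\mathrm{Re}\!\int_{\G}\frac{x'\cdot \nabla_H f}{|x'|^{2}}\,\overline{f}\,dx + \alpha^{2}\int_{\G}\frac{|f|^{2}}{|x'|^{2}}\,dx.
\]
Next I would rewrite the cross term via $2\,\mathrm{Re}(\overline{f}\,X_j f) = X_j|f|^{2}$ and integrate by parts; the boundary terms vanish since $f\in C_0^\infty(\G\setminus\{x'=0\})$. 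Applying \eqref{formula2} with $\gamma = 2$, namely $\mathrm{div}_H(x'/|x'|^{2})=(N-2)/|x'|^{2}$, gives
\[
\mathrm{Re}\!\int_{\G}\frac{x'\cdot \nabla_H f}{|x'|^{2}}\,\overline{f}\,dx = -\,\frac{N-2}{2}\int_{\G}\frac{|f|^{2}}{|x'|^{2}}\,dx.
\]

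Substituting back yields the one-parameter inequality
\[
\|\nabla_H f\|^{2}_{L^{2}(\G)} \geq \bigl(\alpha(N-2)-\alpha^{2}\bigr)\left\|\frac{f}{|x'|}\right\|^{2}_{L^{2}(\G)} \quad \text{for every } \alpha\in\mathbb{R},
\]
and maximising the right-hand side coefficient at $\alpha=(N-2)/2$ produces the constant $(N-2)^{2}/4$; taking square roots gives \eqref{Hardy_strat}. The computational part is routine and presents no real obstacle: the only non-routine point is the sharpness of the constant $\tfrac{N-2}{2}$, which is not produced by the factorization argument itself (the bound is saturated only in a limit, by a sequence of near-extremals of the form $|x'|^{-(N-2)/2}\eta(x)$). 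For this I would simply invoke the already cited sharpness result of \cite{Secchi-Smets-Willen} and \cite{RS17a} mentioned just after \eqref{EQ:istrt-r2}.
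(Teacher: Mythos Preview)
Your proof is correct and is essentially the same as the paper's: both use the operator $T_\alpha f=\nabla_H f+\alpha\,x'|x'|^{-2}f$ and the nonnegativity of $\|T_\alpha f\|^2$, combined with the divergence identity \eqref{formula2}, then optimise in $\alpha$. The only cosmetic difference is that the paper first writes down the formal adjoint $T_\alpha^{+}$ and computes $T_\alpha^{+}T_\alpha f=-\mathcal{L}f+\alpha(\alpha+2-N)|x'|^{-2}f$ before integrating against $\overline{f}$, whereas you expand $\|T_\alpha f\|^2$ directly and handle the cross term by one integration by parts; these are two ways of writing the same calculation, and the paper likewise defers sharpness to \cite{RS17a}.
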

\begin{proof}[Proof of Theorem \ref{Hardy_strat_thm}] Here we use the following one-parameter differential expression
$$\widetilde{T}_{\alpha}:=\nabla_{H}+\alpha\frac{x'}{|x'|^{2}},$$
and its formal adjoint
$$\widetilde{T}^{+}_{\alpha}:=-{\rm div}_{H}(\cdot)+\alpha\frac{x'\cdot}{|x'|^{2}},$$
where $x'\neq0$. Taking into account \eqref{formula2} we have
$$\widetilde{T}^{+}_{\alpha}\widetilde{T}_{\alpha}f=-(\L f)(x)-\alpha {\rm div}_{H}\left(\frac{x'}{|x'|^{2}}f\right)(x)+\alpha\frac{x'\cdot(\nabla_{H}f)(x)}{|x'|^{2}}+\alpha^{2}\frac{f(x)}{|x'|^{2}}$$
$$=-(\L f)(x)-\alpha {\rm div}_{H}\left(\frac{x'}{|x'|^{2}}\right)f(x)+\alpha^{2}\frac{f(x)}{|x'|^{2}}$$
$$=-(\L f)(x)+\frac{\alpha(\alpha+2-N)}{|x'|^{2}}f(x).$$
By integrating by parts and using the nonnegativity of $\widetilde{T}^{+}_{\alpha}\widetilde{T}_{\alpha}$ one calculates
\begin{equation*}
\begin{split}0&\leq \int_{\G}|\widetilde{T}_{\alpha}f|^{2}dx=\int_{\G}\overline{f(x)}(\widetilde{T}^{+}_{\alpha}\widetilde{T}_{\alpha}f)(x)dx\\
&=\int_{\G}|\nabla_{H}f|^{2}dx+\alpha(\alpha+2-N)\int_{\G}\frac{|f(x)|^{2}}{|x'|^{2}}dx.
\end{split}
\end{equation*}
It follows that
$$\int_{\G}|\nabla_{H}f|^{2}dx\geq \alpha(N-2-\alpha)\int_{\G}\frac{|f(x)|^{2}}{|x'|^{2}}dx,$$
which after maximising the constant in the above inequality with respect to $\alpha$, we obtain \eqref{Hardy_strat}. The sharpness of the constant in the obtained inequality was shown in \cite{RS17a}.
\end{proof}

The interesting result here is that by modifying the differential expression $\widetilde{T}_{\alpha}$, the factorization method gives a refinement of the Hardy inequality
\eqref{Hardy_strat}:

\begin{thm}\label{impr_Hardy_strat_thm}
Let $\mathbb{G}$ be a stratified group with $N\geq3$ being the dimension of the first stratum. Let $\alpha\in\mathbb{R}$. Then for all complex-valued functions $f\in C_{0}^{\infty}(\mathbb{G}\backslash\{x'=0\})$ we have
\begin{equation}\label{impr_Hardy_strat}\left\|\frac{x'\cdot\nabla_{H}f}{|x'|}\right\|_{L^{2}(\G)}\geq \frac{N-2}{2}\left\|\frac{f}{|x'|}\right\|_{L^{2}(\G)},
\end{equation}
where the constant $\frac{N-2}{2}$ is sharp.
\end{thm}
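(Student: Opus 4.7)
The plan is to apply the same factorization strategy as in Theorem \ref{Hardy_strat_thm}, but to replace the vector-valued first-order operator $\widetilde{T}_{\alpha}=\nabla_{H}+\alpha x'/|x'|^{2}$ by a scalar operator whose principal part is the radial-type derivative $x'\cdot\nabla_{H}/|x'|$. Concretely, I would introduce
$$S_{\alpha}:=\frac{x'\cdot\nabla_{H}}{|x'|}+\alpha\frac{1}{|x'|},\qquad x'\neq 0,\ \alpha\in\mathbb{R},$$
and exploit the trivial inequality $\int_{\G}|S_{\alpha}f(x)|^{2}\,dx\geq 0$, which is the direct analogue of the nonnegativity of $\widetilde{T}_{\alpha}^{+}\widetilde{T}_{\alpha}$ used in the proof of Theorem \ref{Hardy_strat_thm}.

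Expanding the square for $f\in C_{0}^{\infty}(\G\backslash\{x'=0\})$ gives
$$0\leq\int_{\G}|S_{\alpha}f|^{2}dx=\left\|\frac{x'\cdot\nabla_{H}f}{|x'|}\right\|_{L^{2}(\G)}^{2}+2\alpha\,{\rm Re}\!\int_{\G}\frac{(x'\cdot\nabla_{H}f)\,\overline{f}}{|x'|^{2}}\,dx+\alpha^{2}\left\|\frac{f}{|x'|}\right\|_{L^{2}(\G)}^{2}.$$
The middle term is the only one requiring work. Using $2\,{\rm Re}\,(x'\cdot\nabla_{H}f)\overline{f}=x'\cdot\nabla_{H}|f|^{2}$, integrating by parts via the horizontal divergence theorem, and invoking identity \eqref{formula2} with $\gamma=2$, namely ${\rm div}_{H}(x'/|x'|^{2})=(N-2)/|x'|^{2}$, I obtain
$$2\,{\rm Re}\!\int_{\G}\frac{(x'\cdot\nabla_{H}f)\,\overline{f}}{|x'|^{2}}\,dx=-\int_{\G}{\rm div}_{H}\!\left(\frac{x'}{|x'|^{2}}\right)|f|^{2}\,dx=-(N-2)\left\|\frac{f}{|x'|}\right\|_{L^{2}(\G)}^{2}.$$
Substituting back yields the parameter-dependent estimate
$$\left\|\frac{x'\cdot\nabla_{H}f}{|x'|}\right\|_{L^{2}(\G)}^{2}\geq\alpha(N-2-\alpha)\left\|\frac{f}{|x'|}\right\|_{L^{2}(\G)}^{2},$$
and maximising the right-hand side over $\alpha\in\mathbb{R}$ at $\alpha=(N-2)/2$ produces the sharp-looking constant $((N-2)/2)^{2}$ and hence \eqref{impr_Hardy_strat}.

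The main (and essentially only) obstacle is actually not the algebra but the sharpness statement, for which the factorization argument alone gives no information. I would handle it by comparison with \eqref{EQ:istrt-r2}: the Cauchy--Schwarz inequality $|x'\cdot\nabla_{H}f|\leq |x'|\,|\nabla_{H}f|$ shows that \eqref{impr_Hardy_strat} implies \eqref{EQ:istrt-r2} with the same constant $(N-2)/2$. Since that constant is known to be sharp in \eqref{EQ:istrt-r2} (by the result of \cite{RS17a} already cited in the introduction), any improvement of the constant in \eqref{impr_Hardy_strat} would contradict this known sharpness, so the constant $(N-2)/2$ in \eqref{impr_Hardy_strat} is sharp as well.
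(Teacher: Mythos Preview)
Your proof is correct and follows essentially the same route as the paper: the operator $S_\alpha$ is exactly the paper's $\widehat{T}_\alpha$, and your direct expansion of $\|S_\alpha f\|^2\geq 0$ with the divergence identity \eqref{formula2} is just a slightly more streamlined version of the paper's computation of $\widehat{T}_\alpha^+\widehat{T}_\alpha$ followed by integration by parts. Your sharpness argument via Cauchy--Schwarz and the known sharp constant in \eqref{Hardy_strat} is precisely the argument the paper gives in the Remark following the theorem.
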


\begin{rem} We note that the estimate \eqref{impr_Hardy_strat} implies \eqref{Hardy_strat} by the Cauchy-Schwarz inequality. Consequently, the sharpness of the constant in \eqref{impr_Hardy_strat} follows from the sharpness of the constant in \eqref{Hardy_strat}.
\end{rem}

\begin{proof}[Proof of Theorem \ref{impr_Hardy_strat_thm}] Here we take the one parameter differential expressions in the form
$$\widehat{T}_{\alpha}:=\frac{x'\cdot\nabla_{H}}{|x'|}+\frac{\alpha}{|x'|},$$
and
$$\widehat{T}_{\alpha}^{+}:=-\frac{x'\cdot\nabla_{H}}{|x'|}+\frac{\alpha-N+1}{|x'|},$$
where $x'\neq0$. By a direct calculation and using \eqref{formula1} we get
\begin{equation*}
\begin{aligned}
& \left(\frac{x'\cdot\nabla_{H}}{|x'|}\right)
\left(\frac{x'\cdot(\nabla_{H}f)(x)}{|x'|}\right) \\
& =\frac{\sum_{j,k=1}^{N}(x'_{j}X_{j})(x'_{k}(X_{k}f)(x))}{|x'|^{2}}+
\sum_{j,k=1}^{N}x'_{j}(-1)|x'|^{-2}X_{j}|x'|\frac{x'_{k}(X_{k}f)(x)}{|x'|} \\
& =\frac{\sum_{k=1}^{N}x'_{k}(X_{k}f)(x)}{|x'|^{2}}+\frac{\sum_{j,k=1}^{N}x'_{j}x'_{k}(X_{j}X_{k}f)(x)}{|x'|^{2}}
-\frac{\sum_{k=1}^{N}x'_{k}(X_{k}f)(x)}{|x'|^{2}} \\
& =\frac{\sum_{j,k=1}^{N}x'_{j}x'_{k}(X_{j}X_{k}f)(x)}{|x'|^{2}}
\end{aligned}
\end{equation*}
and
\begin{equation*}
\begin{aligned}
(x'\cdot\nabla_{H})\left(\frac{f}{|x'|}\right) & =\frac{\sum_{k=1}^{N}x'_{k}(X_{k}f)(x)}{|x'|}+
\sum_{k=1}^{N}x'_{k}(-1)|x'|^{-2}X_{k}|x'|f(x) \\
& =\frac{x'\cdot(\nabla_{H}f)(x)}{|x'|}-\frac{f(x)}{|x'|}.
\end{aligned}
\end{equation*}
Taking into account these identities, we obtain
\begin{multline*}
\widehat{T}_{\alpha}^{+}\widehat{T}_{\alpha}f(x) \\
=-\frac{\sum_{j,k=1}^{N}x'_{j}x'_{k}(X_{j}X_{k}f)(x)}{|x'|^{2}}-
(N-1)\frac{x'\cdot(\nabla_{H}f)(x)}{|x'|^{2}}+\frac{\alpha(\alpha+2-N)}{|x'|^{2}}f(x).
\end{multline*}
Using \eqref{formula1}, we calculate
$$
\sum_{j,k=1}^{N}\int_{\G}\frac{\overline{f(x)}x'_{j}x'_{k}(X_{j}X_{k}f)(x)}{|x'|^{2}}dx$$
$$=-(N-1)\sum_{k=1}^{N}\int_{\G}\frac{\overline{f(x)}x'_{k}(X_{k}f)(x)}{|x'|^{2}}dx
-2\sum_{k=1}^{N}\int_{\G}\frac{\overline{f(x)}x'_{k}(X_{k}f)(x)}{|x'|^{2}}dx$$
$$-\sum_{j,k=1}^{N}\int_{\G}\frac{x'_{j}x'_{k}\overline{(X_{j}f)(x)}(X_{k}f)(x)}{|x'|^{2}}dx
-\sum_{j,k=1}^{N}\int_{\G}\overline{f(x)}x'_{j}x'_{k}(X_{k}f)(x)(-2)|x'|^{-3}X_{j}|x'|dx$$$$=
-(N+1)\sum_{k=1}^{N}\int_{\G}\frac{\overline{f(x)}x'_{k}(X_{k}f)(x)}{|x'|^{2}}dx
+2\sum_{j,k=1}^{N}\int_{\G}\frac{\overline{f(x)}(x'_{j})^{2}x'_{k}(X_{k}f)(x)}{|x'|^{4}}dx$$$$
-\int_{\G}\frac{|x'\cdot (\nabla_{H}f)(x)|^{2}}{|x'|^{2}}dx
=-(N-1)\int_{\G}\frac{\overline{f(x)}(x'\cdot (\nabla_{H}f)(x))}{|x'|^{2}}dx$$
\begin{equation}\label{case3}
-\int_{\G}\frac{|x'\cdot (\nabla_{H}f)(x)|^{2}}{|x'|^{2}}dx.
\end{equation}
Taking into account this, integrating by parts, and using the nonnegativity of the operator $\widehat{T}_{\alpha}^{+}\widehat{T}_{\alpha}$, we get
\begin{equation*}
\begin{split}0&\leq \int_{\G}|\widehat{T}_{\alpha}f|^{2}dx=
\int_{\G}\overline{f(x)}(\widehat{T}_{\alpha}^{+}\widehat{T}_{\alpha}f)(x)dx\\
&=-\int_{\G}\left(\frac{\sum_{j,k=1}^{N}x'_{j}x'_{k}\overline{f(x)}(X_{j}X_{k}f)(x)}{|x'|^{2}}+
\frac{(N-1)\overline{f(x)}(x'\cdot(\nabla_{H}f)(x))}{|x'|^{2}}
\right)dx
\end{split}
\end{equation*}
$$+\alpha(\alpha-N+2)\int_{\G}\frac{|f(x)|^{2}}{|x'|^{2}}dx.$$
Consequently, using \eqref{case3} one obtains
$$\int_{\G}\left(\frac{|x'\cdot(\nabla_{H}f)(x)|^{2}}{|x'|^{2}}+\alpha(\alpha-N+2)\frac{|f(x)|^{2}}{|x'|^{2}}\right)dx\geq0.$$
It now follows that
$$\int_{\G}\frac{|x'\cdot(\nabla_{H}f)(x)|^{2}}{|x'|^{2}}dx\geq \alpha((N-2)-\alpha)\int_{\G}\frac{|f(x)|^{2}}{|x'|^{2}}dx.$$
As usual, by maximising $\alpha((N-2)-\alpha)$ with respect to $\alpha$, we obtain \eqref{impr_Hardy_strat}.
\end{proof}
\section{Factorizations on the Heisenberg group}
\label{SEC:Fac_Heisenberg}

Now we discuss the Hardy-Rellich inequalities on the Heisenberg group. In this section we adopt all the notation concerning the Heisenberg group from the introduction as well as some new notation that will be useful in the computations:
\begin{equation}\label{heisen_X}
\widetilde{x}'_{j}=
\begin{cases} x_{j}, 1\leq j\leq n;\\
y_{j-n}, n+1\leq j\leq 2n,   \end{cases},
\;\;\;\widetilde{X}_{j}=
\begin{cases} X_{j}, 1\leq j\leq n;\\
Y_{j-n}, n+1\leq j\leq 2n,   \end{cases}
\end{equation}
and
\begin{equation}\label{heisen_norm}
|\widetilde{x}'|=\sqrt{\sum_{j=1}^{N}({\widetilde{x}'}_{j})^{2}}=\sqrt{\sum_{j=1}^{n}(x^{2}_{j}+y^{2}_{j})}.
\end{equation}
Taking into account the above notations, $\nabla_H$ is defined by
\begin{equation}\label{heisen_grad}\nabla_{H}:=(\widetilde{X}_{1}, \ldots, \widetilde{X}_{N}),
\end{equation}
and the sub-Laplacian
\begin{equation}\label{heisen_L}\L:=\sum_{j=1}^{N}\widetilde{X}^{2}_{j}
=\sum_{j=1}^{n}
\left(\partial_{\widetilde{x}'_{j}}-\frac{\widetilde{x}'_{j+n}}{2}\partial_{t}\right)^{2}+\left(\partial_{\widetilde{x}'_{j+n}}+
\frac{\widetilde{x}'_{j}}{2}\partial_{t}\right)^{2}.
\end{equation}
Therefore, we see that $N=2n$ and the formulae \eqref{formula1} and \eqref{formula2} also hold on the Heisenberg group.

For the following formulation we recall the tangential derivative operator
$Z=\sum_{j=1}^n (x_j\partial_{y_j}-y_j\partial_{x_j})$ given in \eqref{EQ:Lap2}.

\begin{thm}\label{heisen_Rellich} Let $\alpha, \beta\in \mathbb{R}$. Then for all complex-valued functions $f\in C_{0}^{\infty}(\mathbb{H}_{n}\backslash\{\widetilde{x}'=0\})$ we have
\begin{equation}\label{heisen_Rellich1}
\begin{split}
\|\mathcal{L} f\|^{2}_{L^{2}(\mathbb{H}_{n})}&\geq ((N-4)\alpha-2\beta)\left\|\frac{\nabla_{H} f}{|\widetilde{x}'|}\right\|^{2}_{L^{2}(\mathbb{H}_{n})}\\
&-\alpha(\alpha-4)\left\|\frac{\widetilde{x}'\cdot\nabla_{H} f}{|\widetilde{x}'|^{2}}\right\|^{2}_{L^{2}(\mathbb{H}_{n})}\\
&+\beta((N-4)(\alpha-2)-\beta)\left\|\frac{f}{|\widetilde{x}'|^{2}}\right\|^{2}_{L^{2}(\mathbb{H}_{n})}\\
&- 2\alpha
 \int_{\mathbb{H}_{n}} \frac{\overline{f(\widetilde{x}',t)} Z Tf(\widetilde{x}',t)}{|\widetilde{x}'|^{2}}
d\widetilde{x}' dt
+ \alpha \|Tf\|^2_{L^{2}(\mathbb{H}_{n})}.
\end{split}
\end{equation}
and
\begin{equation}\label{heisen_Rellich11}
\begin{split}
\|\mathcal{L} f\|^{2}_{L^{2}(\mathbb{H}_{n})}&\geq (N\alpha-2\beta)\left\|\frac{\nabla_{H} f}{|\widetilde{x}'|}\right\|^{2}_{L^{2}(\mathbb{H}_{n})}\\
&-\alpha(\alpha+4)\left\|\frac{\widetilde{x}'\cdot\nabla_{H} f}{|\widetilde{x}'|^{2}}\right\|^{2}_{L^{2}(\mathbb{H}_{n})}\\
&+(2(N-4)(\alpha(N-2)-\beta)-2\alpha^{2}(N-2)+\alpha\beta N-\beta^{2})\left\|\frac{f}{|\widetilde{x}'|^{2}}\right\|^{2}_{L^{2}(\mathbb{H}_{n})}\\
&+ 2\alpha
 \int_{\mathbb{H}_{n}} \frac{\overline{f(\widetilde{x}',t)} Z Tf(\widetilde{x}',t)}{|\widetilde{x}'|^{2}}
d\widetilde{x}' dt
- \alpha \|Tf\|^2_{L^{2}(\mathbb{H}_{n})}.
\end{split}
\end{equation}
Moreover,
\begin{equation}\label{heisen_Rellich2}
\begin{split}
\|\mathcal{L} f\|^{2}_{L^{2}(\mathbb{H}_{n})}&\geq ((N-\alpha)\alpha-2\beta)\left\|\frac{\nabla_{H} f}{|\widetilde{x}'|}\right\|^{2}_{L^{2}(\mathbb{H}_{n})}\\
&+\beta((N-4)(\alpha-2)-\beta)\left\|\frac{f}{|\widetilde{x}'|^{2}}\right\|^{2}_{L^{2}(\mathbb{H}_{n})}\\
&- 2\alpha
 \int_{\mathbb{H}_{n}} \frac{\overline{f(\widetilde{x}',t)} Z Tf(\widetilde{x}',t)}{|\widetilde{x}'|^{2}}
d\widetilde{x}' dt
+ \alpha \|Tf\|^2_{L^{2}(\mathbb{H}_{n})}
\end{split}
\end{equation}
for $\alpha(\alpha-4)\geq0$, and
\begin{equation}\label{heisen_Rellich22}
\begin{split}
\|\mathcal{L} f\|^{2}_{L^{2}(\mathbb{H}_{n})}&\geq (-2\beta+\alpha(N-\alpha)-4\alpha)\left\|\frac{\nabla_{H} f}{|\widetilde{x}'|}\right\|^{2}_{L^{2}(\mathbb{H}_{n})}\\
&+(2(N-4)(\alpha(N-2)-\beta)-2\alpha^{2}(N-2)+\alpha\beta N-\beta^{2})\left\|\frac{f}{|\widetilde{x}'|^{2}}\right\|^{2}_{L^{2}(\mathbb{H}_{n})}\\
&+ 2\alpha
 \int_{\mathbb{H}_{n}} \frac{\overline{f(\widetilde{x}',t)} Z Tf(\widetilde{x}',t)}{|\widetilde{x}'|^{2}}
d\widetilde{x}' dt
- \alpha \|Tf\|^2_{L^{2}(\mathbb{H}_{n})}
\end{split}
\end{equation}
for $\alpha(\alpha+4)\geq0$.
\end{thm}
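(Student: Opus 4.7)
The plan is to adapt the factorization method of Theorem \ref{strat_Rellich_ab} to the Heisenberg group, but retaining the commutator contributions instead of cancelling them via symmetrisation. Introduce the same two-parameter differential expression
$$T_{\alpha,\beta}:=-\mathcal{L}+\alpha\frac{\widetilde{x}'\cdot\nabla_{H}}{|\widetilde{x}'|^{2}}+\frac{\beta}{|\widetilde{x}'|^{2}}$$
and its formal adjoint
$$T^{+}_{\alpha,\beta}:=-\mathcal{L}-\alpha\frac{\widetilde{x}'\cdot\nabla_{H}}{|\widetilde{x}'|^{2}}-\frac{\alpha(N-2)-\beta}{|\widetilde{x}'|^{2}}$$
as in \eqref{dif_exp1_ab}--\eqref{dif_exp2_ab}, with $N=2n$. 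Since $|\widetilde{x}'|$ depends only on the first-stratum variables and each $\widetilde{X}_{j}$ reduces to $\partial_{\widetilde{x}'_{j}}$ on such functions, the identities \eqref{formula1}--\eqref{formula2} remain valid on $\mathbb{H}_{n}$, and the formal adjoint calculation transfers verbatim.

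The second step is to expand $T^{+}_{\alpha,\beta}T_{\alpha,\beta}f$ and $T_{\alpha,\beta}T^{+}_{\alpha,\beta}f$ as in \eqref{strat_TT0_ab}--\eqref{strat_TT*0_ab}, but this time using the explicit decomposition \eqref{EQ:Lap2}, namely $\mathcal{L}=\Delta_{\widetilde{x}'}+\tfrac{|\widetilde{x}'|^{2}}{4}\partial_{t}^{2}+Z\partial_{t}$. The $\Delta_{\widetilde{x}'}$ part reproduces the stratified-group computation of Theorem \ref{strat_Rellich_ab} with the dimensional parameter $N=2n$, while the additional Heisenberg-specific pieces $\tfrac{|\widetilde{x}'|^{2}}{4}\partial_{t}^{2}+Z\partial_{t}$ interact with the weighted first-order operator $\widetilde{x}'\cdot\nabla_{H}/|\widetilde{x}'|^{2}$ and with $\beta/|\widetilde{x}'|^{2}$ to produce extra terms. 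Integrating by parts using the analogues of \eqref{strat_TT_int_part2_ab}--\eqref{strat_TT_int_part3_ab} together with the fact that $Z(|\widetilde{x}'|^{-2})=0$ (since $Z$ is tangential to the Euclidean spheres in $\widetilde{x}'$) and $\sum_{j=1}^{n}(x_{j}^{2}+y_{j}^{2})/4 \cdot \partial_{t}^{2} = \tfrac{|\widetilde{x}'|^{2}}{4}\partial_{t}^{2}$, these Heisenberg-specific contributions consolidate into exactly the pair $-2\alpha\int_{\mathbb{H}_{n}}\overline{f}\,ZTf/|\widetilde{x}'|^{2}\,d\widetilde{x}'dt$ and $\alpha\|Tf\|^{2}_{L^{2}(\mathbb{H}_{n})}$ appearing in \eqref{heisen_Rellich1}. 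For $T_{\alpha,\beta}T^{+}_{\alpha,\beta}$ the $\alpha$-dependent cross-terms switch sign, giving the opposite signs in these two extra terms and yielding \eqref{heisen_Rellich11}. Applying $\int_{\mathbb{H}_{n}}|T_{\alpha,\beta}f|^{2}\geq 0$ and $\int_{\mathbb{H}_{n}}|T^{+}_{\alpha,\beta}f|^{2}\geq 0$ then produces \eqref{heisen_Rellich1} and \eqref{heisen_Rellich11}.

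For \eqref{heisen_Rellich2} and \eqref{heisen_Rellich22}, apply the Cauchy-Schwarz inequality \eqref{Cauchy-Schwarz} to the term $\|\widetilde{x}'\cdot\nabla_{H}f/|\widetilde{x}'|^{2}\|_{L^{2}(\mathbb{H}_{n})}^{2}$. In \eqref{heisen_Rellich1} its coefficient is $-\alpha(\alpha-4)$, and to preserve the lower bound we need $-\alpha(\alpha-4)\leq 0$, that is $\alpha(\alpha-4)\geq 0$; then the coefficient in front of $\|\nabla_{H}f/|\widetilde{x}'|\|_{L^{2}(\mathbb{H}_{n})}^{2}$ becomes $((N-4)\alpha-2\beta)-\alpha(\alpha-4)=\alpha(N-\alpha)-2\beta$, matching \eqref{heisen_Rellich2}. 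In \eqref{heisen_Rellich11} the coefficient of the same term is $-\alpha(\alpha+4)$, so the sign condition becomes $\alpha(\alpha+4)\geq 0$, yielding \eqref{heisen_Rellich22}.

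The main obstacle is the bookkeeping in the second step: keeping track of the numerous cross terms between $\tfrac{|\widetilde{x}'|^{2}}{4}\partial_{t}^{2}$, $Z\partial_{t}$, and the weighted first-order operators arising in the expansion of $T^{+}_{\alpha,\beta}T_{\alpha,\beta}$ and $T_{\alpha,\beta}T^{+}_{\alpha,\beta}$. Unlike the general stratified setting, where the sum $T^{+}T+TT^{+}$ elegantly disposes of all commutator contributions, here we must verify by direct computation that the $\partial_{t}$-contributions collapse to the clean pair $\pm 2\alpha\,\overline{f}\,ZTf/|\widetilde{x}'|^{2}$ and $\pm\alpha|Tf|^{2}$, with no residual $\beta$- or $\alpha^{2}$-dependent $\partial_{t}$-terms surviving; this is what produces the compact final form.
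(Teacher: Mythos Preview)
Your overall architecture (define $T_{\alpha,\beta}$, use nonnegativity of $T^{+}T$ and $TT^{+}$ separately, then Cauchy--Schwarz for the last two inequalities) matches the paper. The organisational strategy in the middle, however, differs.

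The paper does \emph{not} decompose $\mathcal{L}$ via \eqref{EQ:Lap2} at the outset. Instead it runs the stratified-group calculation of $(T^{+}_{\alpha,\beta}T_{\alpha,\beta}f)(x)$ term by term exactly as in \eqref{strat_TT0_ab}, writing the $\alpha$-coefficient as $J_{1}=-\mathcal{L}\bigl(\tfrac{\widetilde{x}'\cdot\nabla_{H}f}{|\widetilde{x}'|^{2}}\bigr)+\tfrac{\widetilde{x}'\cdot\nabla_{H}}{|\widetilde{x}'|^{2}}(\mathcal{L}f)+\tfrac{N-2}{|\widetilde{x}'|^{2}}\mathcal{L}f$. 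All the remaining $J_{i}$'s are identical to the general stratified case; noncommutativity enters \emph{only} through the single expression $\sum_{k}\widetilde{x}'_{k}\bigl(\widetilde{X}_{k}\mathcal{L}-\mathcal{L}\widetilde{X}_{k}\bigr)f$ inside $J_{1}$. That commutator is computed directly from $[\widetilde{X}_{j},\widetilde{X}_{k}]=\pm T$ (for $k=j\pm n$), yielding $2\sum_{j=1}^{n}(x_{j}Y_{j}-y_{j}X_{j})Tf/|\widetilde{x}'|^{2}$. Only then is the identity $x_{j}Y_{j}-y_{j}X_{j}=x_{j}\partial_{y_{j}}-y_{j}\partial_{x_{j}}+\tfrac12(x_{j}^{2}+y_{j}^{2})\partial_{t}$ invoked, summing to $Z+\tfrac{|\widetilde{x}'|^{2}}{2}T$ and producing the two extra terms in one stroke. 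For \eqref{heisen_Rellich11} the paper takes a shortcut: it subtracts the just-computed $T^{+}T$ from the symmetrised expression $T^{+}T+TT^{+}$ already obtained in \eqref{strat_TT1_ab}, avoiding a second full expansion.

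Your route---splitting $\mathcal{L}=\Delta_{\widetilde{x}'}+\tfrac{|\widetilde{x}'|^{2}}{4}\partial_{t}^{2}+Z\partial_{t}$ from the start---is viable in principle but hides a nontrivial step you do not address: the ``$\Delta_{\widetilde{x}'}$ part'' would naturally produce $\|\nabla_{\widetilde{x}'}f/|\widetilde{x}'|\|^{2}$ and $\|\Delta_{\widetilde{x}'}f\|^{2}$, whereas the target inequalities are stated in terms of the horizontal quantities $\|\nabla_{H}f/|\widetilde{x}'|\|^{2}$ and $\|\mathcal{L}f\|^{2}$. Since $|\nabla_{H}f|^{2}=|\nabla_{\widetilde{x}'}f|^{2}+\tfrac{|\widetilde{x}'|^{2}}{4}|Tf|^{2}+\mathrm{Re}(Zf\,\overline{Tf})$, the conversion back generates further $T$- and $Z$-dependent terms that must be shown to cancel against your ``Heisenberg-specific contributions.'' The paper's approach sidesteps this entirely by keeping everything horizontal until the single commutator is reached.
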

\begin{cor}\label{cor_Rn2} In the abelian case $\G =(\Rn, +)$, we have $\nabla_{H}=\nabla=(\partial_{x_{1}},...,\partial_{x_{n}})$ and we can note that if we argue as in the proof of Theorem \ref{heisen_Rellich}, the last two terms in \eqref{heisen_Rellich1} and \eqref{heisen_Rellich11}  vanish. Therefore, in this case, \eqref{heisen_Rellich1} would coincide with the Gesztesy and Littlejohn's estimate \eqref{Gesztesy_ineq}, while \eqref{heisen_Rellich11} would give the following new estimate for $\alpha,\beta\in \mathbb{R}$ and $f\in C_{0}^{\infty}(\Rn\backslash\{0\})$ with $n\geq2$:
\begin{equation}\label{heisen_Rellich11_Rn}
\begin{split}
& \|\triangle f\|^{2}_{L^{2}(\Rn)}\\
&\geq (n\alpha-2\beta)\left\|\frac{\nabla f}{|x|}\right\|^{2}_{L^{2}(\Rn)}\\
&-\alpha(\alpha+4)\left\|\frac{x\cdot\nabla f}{|x|^{2}}\right\|^{2}_{L^{2}(\Rn)}\\
&+(2(n-4)(\alpha(n-2)-\beta)-2\alpha^{2}(n-2)+\alpha\beta n-\beta^{2})\left\|\frac{f}{|x|^{2}}\right\|^{2}_{L^{2}(\Rn)}.
\end{split}
\end{equation}
\end{cor}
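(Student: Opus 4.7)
The plan is to mirror the proof of Theorem~\ref{heisen_Rellich} verbatim but in the abelian setting $\mathbb{G}=(\mathbb{R}^n,+)$, where the sub-Laplacian $\mathcal{L}$ is replaced by the Euclidean Laplacian $\triangle$ and the horizontal gradient $\nabla_H$ is replaced by the full gradient $\nabla$. Concretely, I would introduce the two-parameter differential expressions
$$T_{\alpha,\beta} := -\triangle + \alpha\,\frac{x\cdot\nabla}{|x|^{2}} + \frac{\beta}{|x|^{2}},\qquad
T_{\alpha,\beta}^{+} := -\triangle - \alpha\,\frac{x\cdot\nabla}{|x|^{2}} - \frac{\alpha(n-2)-\beta}{|x|^{2}},$$
which are exactly the $\mathbb{R}^n$ specialisations (with $N=n$) of the operators used on $\mathbb{H}_n$. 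The estimate corresponding to $T_{\alpha,\beta}^{+}T_{\alpha,\beta}\geq 0$ is already the Gesztesy--Littlejohn inequality \eqref{Gesztesy_ineq}, so I focus on the reverse ordering $T_{\alpha,\beta}T_{\alpha,\beta}^{+}\geq 0$, which yields \eqref{heisen_Rellich11_Rn}.

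I would then compute $(T_{\alpha,\beta}T_{\alpha,\beta}^{+}f)(x)$ line-by-line following the Heisenberg calculation used to derive \eqref{heisen_Rellich11}. The crucial simplification is that the partial derivatives $\partial_{x_1},\ldots,\partial_{x_n}$ commute with one another, so every commutator bracket $[X_j,Y_j]=T$ that appears in the $\mathbb{H}_n$ calculation is zero in $\mathbb{R}^n$. Consequently, the rotational derivative $Z=\sum_j(x_j\partial_{y_j}-y_j\partial_{x_j})$ never arises, nor does the central derivative $T=\partial_t$. In particular, the analogue of formula \eqref{EQ:Lap2} becomes simply $\triangle=\triangle_x$, so the cross-term $2\alpha(Zf/|\widetilde x'|,Tf/|\widetilde x'|)_{L^2}$ and the term $\alpha\|Tf\|^2_{L^2}$ in \eqref{heisen_Rellich11} are identically zero in the abelian case.

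After taking real parts of $(f,T_{\alpha,\beta}T_{\alpha,\beta}^{+}f)_{L^2(\mathbb{R}^n)}$ and integrating by parts — using the abelian versions of the identities \eqref{strat_TT_int_part2_ab} and \eqref{strat_TT_int_part3_ab}, namely
$$\int_{\mathbb{R}^n}\frac{\overline{f}\,\triangle f}{|x|^{2}}\,dx=2\int_{\mathbb{R}^n}\frac{\overline{f}(x\cdot\nabla f)}{|x|^{4}}\,dx-\int_{\mathbb{R}^n}\frac{|\nabla f|^{2}}{|x|^{2}}\,dx$$
and the standard reduction of $\sum_{j,k}\int \overline{f}\,x_jx_k(\partial_j\partial_k f)/|x|^{4}\,dx$ — I would collect the coefficients of $\|\nabla f/|x|\|_{L^2}^2$, $\|x\cdot\nabla f/|x|^{2}\|_{L^2}^2$, and $\|f/|x|^{2}\|_{L^2}^2$. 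These must reduce, by construction, to exactly the coefficients appearing in \eqref{heisen_Rellich11_Rn}, namely $n\alpha-2\beta$, $-\alpha(\alpha+4)$, and $2(n-4)(\alpha(n-2)-\beta)-2\alpha^{2}(n-2)+\alpha\beta n-\beta^{2}$.

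The only real obstacle here is bookkeeping: one must track accurately which terms in the Heisenberg derivation came from commutators of horizontal vector fields (and therefore drop) versus which terms came purely from the action of $\triangle_{\widetilde{x}'}$ on powers of $|\widetilde{x}'|$ (and therefore survive unchanged with $N=n$). Since the Heisenberg calculation is already carried out in Theorem~\ref{heisen_Rellich} and its abelian specialisation is compatible with setting $Z\equiv 0$, $T\equiv 0$ in every intermediate identity, there is no additional analytical content beyond this substitution.
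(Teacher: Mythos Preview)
Your proposal is correct and matches the paper's own reasoning: the corollary has no separate proof in the paper, and its statement already indicates that one simply reruns the proof of Theorem~\ref{heisen_Rellich} in the abelian setting, where all commutators $[\partial_{x_j},\partial_{x_k}]$ vanish so that the $Z$ and $T$ contributions drop out. Your description of which terms survive (those coming from the action of $\triangle$ on powers of $|x|$) versus which drop (those arising from $[X_j,Y_j]=T$) is exactly the bookkeeping the paper has in mind.
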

\begin{rem}\label{REM:real} Let us show that the following summand in the Theorem \ref{heisen_Rellich} is actually real-valued. Indeed, using integration by parts, we calculate
$$\int_{\mathbb{H}_{n}} \frac{\overline{f(\widetilde{x}',t)} Z Tf(\widetilde{x}',t)}{|\widetilde{x}'|^{2}}
d\widetilde{x}' dt=\sum_{j=1}^n\int_{\mathbb{H}_{n}}
\frac{\overline{f(\widetilde{x}',t)}(x_j\partial_{y_j}-y_j\partial_{x_j})(\partial_{t}f(\widetilde{x}',t))}{|\widetilde{x}'|^{2}}
d\widetilde{x}' dt$$
$$=-\frac{1}{2}\sum_{j=1}^n\int_{\mathbb{H}_{n}}\overline{\partial_{t}f(\widetilde{x}',t)}\left(\partial_{y_j}(f(\widetilde{x}',t))
\partial_{x_j}(\log|\widetilde{x}'|^{2})-\partial_{x_j}(f(\widetilde{x}',t))
\partial_{y_j}(\log|\widetilde{x}'|^{2})\right)d\widetilde{x}' dt
$$
$$=\frac{1}{2}\sum_{j=1}^n\int_{\mathbb{H}_{n}}\log|\widetilde{x}'|^{2}
(\partial_{y_j}(f(\widetilde{x}',t))\overline{\partial_{tx_j}f(\widetilde{x}',t)}-
\partial_{x_j}(f(\widetilde{x}',t))\overline{\partial_{ty_j}f(\widetilde{x}',t)})d\widetilde{x}' dt$$
$$=\frac{1}{2}\sum_{j=1}^n\int_{\mathbb{H}_{n}}\log|\widetilde{x}'|^{2}
(\partial_{y_j}(f(\widetilde{x}',t))\overline{\partial_{tx_j}f(\widetilde{x}',t)}
+\overline{\partial_{y_j}(f(\widetilde{x}',t))\overline{\partial_{tx_j}f(\widetilde{x}',t)}}
)d\widetilde{x}' dt$$
$$
=\sum_{j=1}^n\int_{\mathbb{H}_{n}}\log|\widetilde{x}'|^{2}
{\rm Re}(\partial_{y_j}(f(\widetilde{x}',t))\overline{\partial_{tx_j}f(\widetilde{x}',t)}
)d\widetilde{x}' dt.
$$
\end{rem}
\begin{rem}
We note that we can use that $ZT=TZ$ and integrate by parts in $t$, so that the term
$$
 \int_{\mathbb{H}_{n}} \frac{\overline{f(\widetilde{x}',t)} Z Tf(\widetilde{x}',t)}{|\widetilde{x}'|^{2}}
d\widetilde{x}' dt = - \left(\frac{Zf}{|\widetilde{x}'|}, \frac{Tf}{|\widetilde{x}'|}\right)_{L^{2}(\mathbb{H}_{n})}
$$
can be seen as an interaction between the central and the tangential derivatives.
This formula also implies \eqref{EQ:Heis1}-\eqref{EQ:Heis22} using \eqref{heisen_Rellich1}-\eqref{heisen_Rellich22}, respectively.

Writing \eqref{EQ:Lap2} in the form
\begin{equation}\label{EQ:Lap3}
ZT=\L-\Delta_{\widetilde{x}'}-\frac{|\widetilde{x}'|^2}{4}T^2,
\end{equation}
we have
\begin{multline}\label{EQ:Lapfinal}
\int_{\mathbb{H}_{n}} \frac{\overline{f(x)} Z  Tf(x)}{|\widetilde{x}'|^{2}}
d\widetilde{x}' dt \\
=\left(\frac{\L f}{|\widetilde{x}'|},\frac{f}{|\widetilde{x}'|}\right)_{L^2(\mathbb{H}_{n})}
-\left(\frac{\Delta_{\widetilde{x}'} f}{|\widetilde{x}'|},\frac{f}{|\widetilde{x}'|}\right)_{L^2(\mathbb{H}_{n})}+\frac14 \|Tf\|^2_{L^2(\mathbb{H}_{n})}.
\end{multline}
Consequently, inequalities \eqref{heisen_Rellich1} and \eqref{heisen_Rellich11} can be also written as
\begin{equation}\label{heisen_Rellich1-1}
\begin{split}
\|\mathcal{L} f\|^{2}_{L^{2}(\mathbb{H}_{n})}&\geq ((N-4)\alpha-2\beta)\left\|\frac{\nabla_{H} f}{|\widetilde{x}'|}\right\|^{2}_{L^{2}(\mathbb{H}_{n})}\\
&-\alpha(\alpha-4)\left\|\frac{\widetilde{x}'\cdot\nabla_{H} f}{|\widetilde{x}'|^{2}}\right\|^{2}_{L^{2}(\mathbb{H}_{n})}\\
&+\beta((N-4)(\alpha-2)-\beta)\left\|\frac{f}{|\widetilde{x}'|^{2}}\right\|^{2}_{L^{2}(\mathbb{H}_{n})}\\
&- 2\alpha \left(\frac{\L f}{|\widetilde{x}'|},\frac{f}{|\widetilde{x}'|}\right)_{L^2(\mathbb{H}_{n})}
+2\alpha\left(\frac{\Delta_{\widetilde{x}'} f}{|\widetilde{x}'|},\frac{f}{|\widetilde{x}'|}\right)_{L^2(\mathbb{H}_{n})}
+ \frac{\alpha}{2} \|Tf\|^{2}_{L^{2}(\mathbb{H}_{n})},
\end{split}
\end{equation}
and
\begin{equation}\label{heisen_Rellich1-11}
\begin{split}
\|\mathcal{L} f\|^{2}_{L^{2}(\mathbb{H}_{n})}&\geq (N\alpha-2\beta)\left\|\frac{\nabla_{H} f}{|\widetilde{x}'|}\right\|^{2}_{L^{2}(\mathbb{H}_{n})}\\
&-\alpha(\alpha+4)\left\|\frac{\widetilde{x}'\cdot\nabla_{H} f}{|\widetilde{x}'|^{2}}\right\|^{2}_{L^{2}(\mathbb{H}_{n})}\\
&+(2(N-4)(\alpha(N-2)-\beta)-2\alpha^{2}(N-2)+\alpha\beta N-\beta^{2})\left\|\frac{f}{|\widetilde{x}'|^{2}}\right\|^{2}_{L^{2}(\mathbb{H}_{n})}\\
&+2\alpha \left(\frac{\L f}{|\widetilde{x}'|},\frac{f}{|\widetilde{x}'|}\right)_{L^2(\mathbb{H}_{n})}
-2\alpha\left(\frac{\Delta_{\widetilde{x}'} f}{|\widetilde{x}'|},\frac{f}{|\widetilde{x}'|}\right)_{L^2(\mathbb{H}_{n})}
-\frac{\alpha}{2} \|Tf\|^{2}_{L^{2}(\mathbb{H}_{n})},
\end{split}
\end{equation}
respectively.
\end{rem}

\begin{proof}[Proof of Theorem \ref{heisen_Rellich}]
We will be applying the factorization method with the following differential expressions for two real parameters $\alpha,\beta\in\mathbb R$,
\begin{equation}\label{dif_exp1}
T_{\alpha, \beta}:=-\mathcal{L}+\alpha\frac{\widetilde{x}'\cdot\nabla_{H}}{|\widetilde{x}'|^{2}}+\frac{\beta}{|\widetilde{x}'|^{2}}
\end{equation}
and its formal adjoint
\begin{equation}\label{dif_exp2}T^{+}_{\alpha,\beta}:=-\mathcal{L}-\alpha\frac{\widetilde{x}'\cdot\nabla_{H}}{|\widetilde{x}'|^{2}}
-\frac{\alpha(N-2)-\beta}{|\widetilde{x}'|^{2}},
\end{equation}
where $\widetilde{x}'\neq0$. Then, by a direct calculation for any function $f\in C_{0}^{\infty}(\mathbb{H}_{n}\backslash\{\widetilde{x}'=0\})$ we have
\begin{equation}\label{heisen_TT0}
\begin{aligned}
& (T^{+}_{\alpha,\beta}T_{\alpha,\beta}f)(x) \\
& =\left(-\mathcal{L}-\alpha\frac{\widetilde{x}'\cdot\nabla_{H}}{|\widetilde{x}'|^{2}}
-\frac{\alpha(N-2)-\beta}{|\widetilde{x}'|^{2}}\right)\left(-(\mathcal{L}f)(x)
+\alpha\frac{\widetilde{x}'\cdot(\nabla_{H}f)}{|\widetilde{x}'|^{2}}+
\frac{\beta f(x)}{|\widetilde{x}'|^{2}}\right) \\
& =(\L^{2}f)(x)+\alpha\left(-\L\left(\frac{\widetilde{x}'\cdot(\nabla_{H}f)}{|\widetilde{x}'|^{2}}\right)(x)+\frac{\widetilde{x}'\cdot\nabla_{H}}{|\widetilde{x}'|^{2}}(\L f)(x)+
\frac{N-2}{|\widetilde{x}'|^{2}}(\L f)(x)\right) \\
& \quad +\beta\left(-\L\left(\frac{f}{|\widetilde{x}'|^{2}}\right)(x)-\frac{(\L f)(x)}{|\widetilde{x}'|^{2}}\right) \\
& \quad +\alpha\beta\left(-\frac{\widetilde{x}'\cdot\nabla_{H}}{|\widetilde{x}'|^{2}}\left(\frac{f}{|\widetilde{x}'|^{2}}\right)(x)+
\frac{\widetilde{x}'\cdot(\nabla_{H}f)(x)}{|\widetilde{x}'|^{4}}-\frac{(N-2)f(x)}{|\widetilde{x}'|^{4}}\right)
\\
& \; +\alpha^{2}\left(-\left(\frac{\widetilde{x}'\cdot\nabla_{H}}{|\widetilde{x}'|^{2}}\right)
\left(\frac{\widetilde{x}'\cdot(\nabla_{H}f)}{|\widetilde{x}'|^{2}}\right)(x)-
(N-2)\frac{\widetilde{x}'\cdot(\nabla_{H}f)(x)}{|\widetilde{x}'|^{4}}\right)+\beta^{2}\frac{f(x)}{|\widetilde{x}'|^{4}} \\
& =:(\L^{2}f)(x)+\alpha J_{1}+\beta J_{2}+\alpha\beta J_{3}+\alpha^{2}J_{4}+\beta^{2}J_{5}.
\end{aligned}
\end{equation}
In order to simplify this, let us first calculate $J_{1}$:
$$J_{1}=-\L\left(\frac{\widetilde{x}'\cdot(\nabla_{H}f)}{|\widetilde{x}'|^{2}}\right)(x)
+\frac{\widetilde{x}'\cdot\nabla_{H}}{|\widetilde{x}'|^{2}}(\L f)(x)+
\frac{N-2}{|\widetilde{x}'|^{2}}(\L f)(x)$$
$$=-\sum_{j=1}^{N}\widetilde{X}^{2}_{j}
\left(\frac{\sum_{k=1}^{N}\widetilde{x}'_{k}(\widetilde{X}_{k}f)}{|\widetilde{x}'|^{2}}\right)(x)
+\frac{\widetilde{x}'\cdot\nabla_{H}}{|\widetilde{x}'|^{2}}(\L f)(x)+
\frac{N-2}{|\widetilde{x}'|^{2}}(\L f)(x)$$
$$=-\sum_{j=1}^{N}\widetilde{X}_{j}
\left(-\frac{2\widetilde{x}'_{j}}{|\widetilde{x}'|^{4}}\sum_{k=1}^{N}\widetilde{x}'_{k}(\widetilde{X}_{k}f)(x)+
\frac{(\widetilde{X}_{j}f)+\sum_{k=1}^{N}\widetilde{x}'_{k}(\widetilde{X}_{j}\widetilde{X}_{k}f)}{|\widetilde{x}'|^{2}}
\right)(x)$$
$$+\frac{\widetilde{x}'\cdot\nabla_{H}}{|\widetilde{x}'|^{2}}(\L f)(x)+
\frac{N-2}{|\widetilde{x}'|^{2}}(\L f)(x)$$
$$=\sum_{j=1}^{N}\left(-\frac{8(\widetilde{x}'_{j})^{2}}{|\widetilde{x}'|^{6}}\sum_{k=1}^{N}\widetilde{x}'_{k}(\widetilde{X}_{k}f)(x)
+\frac{2}{|\widetilde{x}'|^{4}}\sum_{k=1}^{N}\widetilde{x}'_{k}(\widetilde{X}_{k}f)(x)\right)$$
$$+\sum_{j=1}^{N}
\frac{2\widetilde{x}'_{j}}{|\widetilde{x}'|^{4}}\left((\widetilde{X}_{j}f)(x)+\sum_{k=1}^{N}\widetilde{x}'_{k}(\widetilde{X}_{j}\widetilde{X}_{k}f)(x)\right)$$
$$+\sum_{j=1}^{N}\frac{2\widetilde{x}'_{j}\left((\widetilde{X}_{j}f)(x)+\sum_{k=1}^{N}\widetilde{x}'_{k}(\widetilde{X}_{j}\widetilde{X}_{k}f)(x)\right)}{|\widetilde{x}'|^{4}}$$
$$-\sum_{j=1}^{N}\frac{(2\widetilde{X}^{2}_{j}f)(x)+\sum_{k=1}^{N}\widetilde{x}'_{k}(\widetilde{X}^{2}_{j}\widetilde{X}_{k}f)(x)}{|\widetilde{x}'|^{2}}$$
$$+\frac{\widetilde{x}'\cdot\nabla_{H}}{|\widetilde{x}'|^{2}}(\L f)(x)+
\frac{N-2}{|\widetilde{x}'|^{2}}(\L f)(x)$$
$$=\frac{2N-4}{|\widetilde{x}'|^{4}}\widetilde{x}'\cdot(\nabla_{H}f)(x)+\frac{4}{|\widetilde{x}'|^{4}}\sum_{j,k=1}^{N}\widetilde{x}'_{j}\widetilde{x}'_{k}(\widetilde{X}_{j}\widetilde{X}_{k}f)(x)+
\frac{N-4}{|\widetilde{x}'|^{2}}(\L f)(x) $$
\begin{equation}\label{heisen_TT01}
-\frac{\sum_{k=1}^{N}\widetilde{x}'_{k}\L (\widetilde{X}_{k}f)}{|\widetilde{x}'|^{2}}(x)+\frac{\widetilde{x}'\cdot\nabla_{H}}{|\widetilde{x}'|^{2}}(\L f)(x).
\end{equation}
Here using two times the formulae
$$[\widetilde{X}_{j},\widetilde{X}_{k}]=T, \; [\widetilde{X}_{k},\widetilde{X}_{j}]=-T, \;\;k=n+j, \; j=1,\ldots,n,$$
for the last two summands in the above equality and noticing that all other commutators
are zero,
distinguishing between the cases $k=j+n$ and $j=k+n$ in the sums below we get
$$-\frac{\sum_{k=1}^{N}\widetilde{x}'_{k}\L (\widetilde{X}_{k}f)}{|\widetilde{x}'|^{2}}(x)+\frac{\widetilde{x}'\cdot\nabla_{H}}{|\widetilde{x}'|^{2}}(\L f)(x)$$
$$=\sum_{j,k=1}^{N}\frac{\widetilde{x}'_{k}}{|\widetilde{x}'|^{2}}((\widetilde{X}_{k}\widetilde{X}_{j}\widetilde{X}_{j}f)(x)-(\widetilde{X}_{j}\widetilde{X}_{j}\widetilde{X}_{k}f)(x))$$
$$=\sum_{j=1}^{n}\frac{\widetilde{x}'_{j+n}}{|\widetilde{x}'|^{2}}((\widetilde{X}_{j+n}\widetilde{X}_{j}\widetilde{X}_{j}f)(x)-(\widetilde{X}_{j}Tf)(x)-(\widetilde{X}_{j}\widetilde{X}_{j+n}\widetilde{X}_{j}f)(x))$$
$$+\sum_{k=1}^{n}\frac{\widetilde{x}'_{k}}{|\widetilde{x}'|^{2}}((\widetilde{X}_{k}\widetilde{X}_{k+n}\widetilde{X}_{k+n}f)(x)+(\widetilde{X}_{k+n}Tf)(x)-(\widetilde{X}_{k+n}\widetilde{X}_{k}\widetilde{X}_{k+n}f)(x))$$
$$=\sum_{j=1}^{n}\frac{\widetilde{x}'_{j+n}}{|\widetilde{x}'|^{2}}((\widetilde{X}_{j+n}\widetilde{X}_{j}\widetilde{X}_{j}f)(x)-2(\widetilde{X}_{j}Tf)(x)-(\widetilde{X}_{j+n}\widetilde{X}_{j}\widetilde{X}_{j}f)(x))$$
$$+\sum_{k=1}^{n}\frac{\widetilde{x}'_{k}}{|\widetilde{x}'|^{2}}((\widetilde{X}_{k}\widetilde{X}_{k+n}\widetilde{X}_{k+n}f)(x)+2(\widetilde{X}_{k+n}Tf)(x)-(\widetilde{X}_{k}\widetilde{X}_{k+n}\widetilde{X}_{k+n}f)(x))$$
$$=2\sum_{j=1}^{n}\frac{({x}_{j}Y_j-y_j X_j)  Tf(x)}{|\widetilde{x}'|^{2}},$$
where in the last line we use the usual notation \eqref{EQ:Heis-nots} for variables and vector fields on the Heisenberg group.
Putting this in \eqref{heisen_TT01}, we obtain
$$
J_{1}=\frac{2N-4}{|\widetilde{x}'|^{4}}\widetilde{x}'\cdot(\nabla_{H}f)(x)+\frac{4}{|\widetilde{x}'|^{4}}\sum_{j,k=1}^{N}\widetilde{x}'_{j}\widetilde{x}'_{k}(\widetilde{X}_{j}\widetilde{X}_{k}f)(x)$$
\begin{equation}\label{J1}+
\frac{N-4}{|\widetilde{x}'|^{2}}(\L f)(x)+2\sum_{j=1}^{n}\frac{({x}_{j}Y_j-y_j X_j)  Tf(x)}{|\widetilde{x}'|^{2}}.
\end{equation}
Now for $J_{2}$ we have
$$J_{2}=-\L\left(\frac{f}{|\widetilde{x}'|^{2}}\right)(x)-\frac{(\L f)(x)}{|\widetilde{x}'|^{2}}=-
\sum_{j=1}^{N}X^{2}_{j}\left(\frac{f}{|\widetilde{x}'|^{2}}\right)(x)-\frac{(\L f)(x)}{|\widetilde{x}'|^{2}}$$
$$=-
\sum_{j=1}^{N}\widetilde{X}_{j}\left(\frac{\widetilde{X}_{j}f}{|\widetilde{x}'|^{2}}
-\frac{2\widetilde{x}'_{j}f}{|\widetilde{x}'|^{4}}\right)(x)-\frac{(\L f)(x)}{|\widetilde{x}'|^{2}}$$
$$=-\sum_{j=1}^{N}\left(\frac{(\widetilde{X}^{2}_{j}f)(x)}{|\widetilde{x}'|^{2}}-
\frac{4\widetilde{x}'_{j}(\widetilde{X}_{j}f)(x)}{|\widetilde{x}'|^{4}}
+\frac{8(\widetilde{x}'_{j})^{2}f(x)}{|\widetilde{x}'|^{6}}
-\frac{2f(x)}{|\widetilde{x}'|^{4}}\right)-\frac{(\L f)(x)}{|\widetilde{x}'|^{2}}$$
\begin{equation}\label{J2}
=-\frac{2(\L f)(x)}{|\widetilde{x}'|^{2}}+\frac{4\widetilde{x}'\cdot(\nabla_{H}f)(x)}{|\widetilde{x}'|^{4}}+(2N-8)\frac{f(x)}{|\widetilde{x}'|^{4}}.
\end{equation}
For $J_{3}$, one calculates
$$J_{3}=-\frac{\widetilde{x}'\cdot\nabla_{H}}{|\widetilde{x}'|^{2}}\left(\frac{f}{|\widetilde{x}'|^{2}}\right)(x)+
\frac{\widetilde{x}'\cdot(\nabla_{H}f)(x)}{|\widetilde{x}'|^{4}}-\frac{(N-2)f(x)}{|\widetilde{x}'|^{4}}$$
$$=-\frac{f(x)}{|\widetilde{x}'|^{2}}(\widetilde{x}'\cdot\nabla_{H})(|\widetilde{x}'|^{-2})-\frac{(N-2)f(x)}{|\widetilde{x}'|^{4}}
=-\frac{f(x)}{|\widetilde{x}'|^{2}}\sum_{j=1}^{N}\widetilde{x}'_{j}\widetilde{X}_{j}(|\widetilde{x}'|^{-2})
-\frac{(N-2)f(x)}{|\widetilde{x}'|^{4}}$$
\begin{equation}\label{J3}
=2\sum_{j=1}^{N}\frac{(\widetilde{x}'_{j})^{2}f(x)}{|\widetilde{x}'|^{6}}-\frac{(N-2)f(x)}{|\widetilde{x}'|^{4}}=-\frac{(N-4)f(x)}{|\widetilde{x}'|^{4}}.
\end{equation}
By a direct calculation, we have for $J_{4}$
\begin{equation}\label{J4}
\begin{aligned}
J_{4} & =-\left(\frac{\widetilde{x}'\cdot\nabla_{H}}{|\widetilde{x}'|^{2}}\right)
\left(\frac{\widetilde{x}'\cdot(\nabla_{H}f)}{|\widetilde{x}'|^{2}}\right)(x)-
(N-2)\frac{\widetilde{x}'\cdot(\nabla_{H}f)(x)}{|\widetilde{x}'|^{4}} \\
& =-\frac{\sum_{j,k=1}^{N}(\widetilde{x}'_{j}\widetilde{X}_{j})
(\widetilde{x}'_{k}(\widetilde{X}_{k}f))(x)}{|\widetilde{x}'|^{4}}-
\sum_{j,k=1}^{N}\widetilde{x}'_{j}(-2)|\widetilde{x}'|^{-3}\widetilde{X}_{j}|\widetilde{x}'|\frac{\widetilde{x}'_{k}(\widetilde{X}_{k}f)(x)}{|\widetilde{x}'|^{2}} \\
& \quad -
(N-2)\frac{\widetilde{x}'\cdot(\nabla_{H}f)(x)}{|\widetilde{x}'|^{4}} \\
& =-\frac{\sum_{k=1}^{N}\widetilde{x}'_{k}(\widetilde{X}_{k}f)(x)}{|\widetilde{x}'|^{4}}-\frac{\sum_{j,k=1}^{N}\widetilde{x}'_{j}\widetilde{x}'_{k}\widetilde{X}_{j}(\widetilde{X}_{k}f)(x)}{|\widetilde{x}'|^{4}}
+\frac{2\sum_{k=1}^{N}\widetilde{x}'_{k}(\widetilde{X}_{k}f)(x)}{|\widetilde{x}'|^{4}} \\
& \quad -
(N-2)\frac{\widetilde{x}'\cdot(\nabla_{H}f)(x)}{|\widetilde{x}'|^{4}} \\
& =-
(N-3)\frac{\widetilde{x}'\cdot(\nabla_{H}f)(x)}{|\widetilde{x}'|^{4}}-
\frac{\sum_{j,k=1}^{N}\widetilde{x}'_{j}\widetilde{x}'_{k}(\widetilde{X}_{j}\widetilde{X}_{k}f)(x)}{|\widetilde{x}'|^{4}}.
\end{aligned}
\end{equation}

Now putting \eqref{J1}-\eqref{J4} in \eqref{heisen_TT0}, we obtain
\begin{equation}\label{heisen_TT1}
\begin{split}
(T^{+}_{\alpha,\beta}T_{\alpha,\beta}f)(x)=&(\L^{2}f)(x)-(2\beta-(N-4)\alpha)\frac{(\L f)(x)}{|\widetilde{x}'|^{2}}\\
&+(2(N-2)\alpha+4\beta-(N-3)\alpha^{2})\frac{\widetilde{x}'\cdot(\nabla_{H} f)(x)}{|\widetilde{x}'|^{4}}\\
&+(\beta^{2}+2(N-4)\beta-(N-4)\alpha\beta)\frac{f(x)}{|\widetilde{x}'|^{4}}\\
&-\alpha(\alpha-4)\frac{\sum_{j,k=1}^{N}\widetilde{x}'_{j}\widetilde{x}'_{k}(\widetilde{X}_{j}\widetilde{X}_{k}f)(x)}{|\widetilde{x}'|^{4}}\\
& + 2\alpha \sum_{j=1}^{n}\frac{({x}_{j}Y_j-y_j X_j)  Tf(x)}{|\widetilde{x}'|^{2}}.
\end{split}
\end{equation}
Now using the nonnegativity of $T_{\alpha,\beta}^{+}T_{\alpha,\beta}$ and integrating by parts, we get
$$\int_{\mathbb{H}_{n}}|(T_{\alpha,\beta}f)(x)|^{2}dx=\int_{\mathbb{H}_{n}}\overline{f(x)}(T^{+}_{\alpha,\beta}T_{\alpha,\beta}f)(x)dx\geq0,$$
wherefor brevity we can write $dx$ for the integral on the Heisenberg group $\mathbb{H}_{n}$.
Putting \eqref{heisen_TT1} into this inequality, one calculates
\begin{equation}\label{heisen_TT_int_part1}
\begin{split}
\int_{\mathbb{H}_{n}}|(\mathcal{L}f)(x)|^{2}dx-(2\beta-(N-4)\alpha)\int_{\mathbb{H}_{n}}\frac{\overline{f(x)}(\L f)(x)}{|\widetilde{x}'|^{2}}dx\\
+(2(N-2)\alpha+4\beta-(N-3)\alpha^{2})\int_{\mathbb{H}_{n}}\frac{\overline{f(x)}(\widetilde{x}'\cdot (\nabla_{H}f)(x))}{|\widetilde{x}'|^{4}}dx\\
+(\beta^{2}+2(N-4)\beta-(N-4)\alpha\beta)\int_{\mathbb{H}_{n}}\frac{|f(x)|^{2}}{|\widetilde{x}'|^{4}}dx\\
-\alpha(\alpha-4)\sum_{j,k=1}^{N}\int_{\mathbb{H}_{n}}\frac{\overline{f(x)}\widetilde{x}'_{j}\widetilde{x}'_{k}
(\widetilde{X}_{j}\widetilde{X}_{k}f)(x)}{|\widetilde{x}'|^{4}}dx\\
+ 2\alpha \sum_{j=1}^{n}\int_{\mathbb{H}_{n}}\frac{\overline{f(x)} ({x}_{j}Y_j-y_j X_j)  Tf(x)}{|\widetilde{x}'|^{2}}
dx\geq0.
\end{split}
\end{equation}
Let us analyse the last term in this inequality. Using formula \eqref{EQ:Heis-nots}, we have
\begin{equation}\label{EQ:Heis-tan1}
{x}_{j}Y_j-y_j X_j=x_j(\partial_{y_j}+\frac{x_j}{2}\partial_t)-y_j(\partial_{x_j}-\frac{y_j}{2}\partial_t)
=x_j\partial_{y_j}-y_j\partial_{x_j}+\frac{x_j^2+y_j^2}{2}\partial_t.
\end{equation}
Consequently, we have
\begin{equation}\label{EQ:Heis-tan2}
\sum_{j=1}^n ({x}_{j}Y_j-y_j X_j)=Z+\frac{|\widetilde{x}'|^2}{2}T,
\end{equation}
with the tangential derivative $Z$ defined in \eqref{EQ:Lap2} and $T=\partial_t$.

For the other terms in \eqref{heisen_TT_int_part1}, using \eqref{formula1}, we have
\begin{equation}\label{heisen_TT_int_part2}
\begin{aligned}
\int_{\mathbb{H}_{n}}\frac{\overline{f(x)}(\L f)(x)}{|\widetilde{x}'|^{2}}dx
& =-\sum_{j=1}^{N}\int_{\mathbb{H}_{n}}\frac{\overline{(\widetilde{X}_{j}f)(x)}(\widetilde{X}_{j}f)(x)}
{|\widetilde{x}'|^{2}} \\
& \quad -\sum_{j=1}^{N}\int_{\mathbb{H}_{n}}\overline{f(x)}(-2)|\widetilde{x}'|^{-3}\widetilde{X}_{j}
|\widetilde{x}'|(\widetilde{X}_{j}f)(x)dx \\
& =2\int_{\mathbb{H}_{n}}\frac{\overline{f(x)}(\widetilde{x}'\cdot (\nabla_{H}f)(x))}{|\widetilde{x}'|^{4}}dx
-\int_{\mathbb{H}_{n}}\frac{|(\nabla_{H}f)(x)|^{2}}{|\widetilde{x}'|^{2}}dx
\end{aligned}
\end{equation}
and
$$
\sum_{j,k=1}^{N}\int_{\mathbb{H}_{n}}\frac{\overline{f(x)}\widetilde{x}'_{j}\widetilde{x}'_{k}(\widetilde{X}_{j}\widetilde{X}_{k}f)(x)}{|\widetilde{x}'|^{4}}dx$$
$$=-(N-1)\sum_{k=1}^{N}\int_{\mathbb{H}_{n}}\frac{\overline{f(x)}\widetilde{x}'_{k}(\widetilde{X}_{k}f)(x)}{|\widetilde{x}'|^{4}}dx
-2\sum_{k=1}^{N}\int_{\mathbb{H}_{n}}\frac{\overline{f(x)}\widetilde{x}'_{k}(\widetilde{X}_{k}f)(x)}{|\widetilde{x}'|^{4}}dx$$
$$-\sum_{j,k=1}^{N}\int_{\mathbb{H}_{n}}\frac{\widetilde{x}'_{j}\widetilde{x}'_{k}\overline{(\widetilde{X}_{j}f)(x)}
(\widetilde{X}_{k}f)(x)}{|\widetilde{x}'|^{4}}dx
-\sum_{j,k=1}^{N}\int_{\mathbb{H}_{n}}\overline{f(x)}\widetilde{x}'_{j}\widetilde{x}'_{k}(\widetilde{X}_{k}f)(x)(-4)|\widetilde{x}'|^{-5}\widetilde{X}_{j}|\widetilde{x}'|dx$$$$=
-(N+1)\sum_{k=1}^{N}\int_{\mathbb{H}_{n}}\frac{\overline{f(x)}\widetilde{x}'_{k}(\widetilde{X}_{k}f)(x)}{|\widetilde{x}'|^{4}}dx
+4\sum_{j,k=1}^{N}\int_{\mathbb{H}_{n}}\frac{\overline{f(x)}(\widetilde{x}'_{j})^{2}\widetilde{x}'_{k}(\widetilde{X}_{k}f)(x)}
{|\widetilde{x}'|^{6}}dx$$$$
-\int_{\mathbb{H}_{n}}\frac{|\widetilde{x}'\cdot (\nabla_{H}f)(x)|^{2}}{|\widetilde{x}'|^{4}}dx
=-(N-3)\int_{\mathbb{H}_{n}}\frac{\overline{f(x)}(\widetilde{x}'\cdot (\nabla_{H}f)(x))}{|\widetilde{x}'|^{4}}dx$$
\begin{equation}\label{heisen_TT_int_part3}
-\int_{\mathbb{H}_{n}}\frac{|\widetilde{x}'\cdot (\nabla_{H}f)(x)|^{2}}{|\widetilde{x}'|^{4}}dx.
\end{equation}
Putting \eqref{EQ:Heis-tan2},  \eqref{heisen_TT_int_part2} and \eqref{heisen_TT_int_part3} in \eqref{heisen_TT_int_part1}, we obtain
$$
\int_{\mathbb{H}_{n}}|(\mathcal{L}f)(x)|^{2}dx
+(2\alpha(N-4)-4\beta+\alpha(\alpha-4)(N-3)-\alpha^{2}(N-3)+2\alpha(N-2)+4\beta)$$
$$\times\int_{\mathbb{H}_{n}}\frac{\overline{f(x)}(\widetilde{x}'\cdot (\nabla_{H}f)(x))}{|\widetilde{x}'|^{4}}dx+(\beta^{2}+2(N-4)\beta-(N-4)\alpha\beta)\int_{\mathbb{H}_{n}}\frac{|f(x)|^{2}}{|\widetilde{x}'|^{4}}dx$$
$$-(\alpha(N-4)-2\beta)\int_{\mathbb{H}_{n}}\frac{|(\nabla_{H} f)(x)|^{2}}{|\widetilde{x}'|^{2}}dx$$$$
+\alpha(\alpha-4)\int_{\mathbb{H}_{n}}\frac{|\widetilde{x}'\cdot (\nabla_{H}f)(x)|^{2}}{|\widetilde{x}'|^{4}}dx$$
$$+ 2\alpha  \int_{\mathbb{H}_{n}} \frac{\overline{f(x)} (Z+\frac{|\widetilde{x}'|^2}{2}T)  Tf(x)}{|\widetilde{x}'|^{2}}
dx\geq0,
$$
which implies \eqref{heisen_Rellich1}.  Then, we can also obtain \eqref{heisen_Rellich2} from \eqref{heisen_Rellich1} using the Cauchy-Schwarz inequality \eqref{Cauchy-Schwarz} for $\alpha(\alpha-4)\geq0$ in the last estimate.

Thus, we have obtained \eqref{heisen_Rellich1} and \eqref{heisen_Rellich2} using the nonnegativity of $T_{\alpha,\beta}^{+}T_{\alpha,\beta}$. Now let us show \eqref{heisen_Rellich11} and \eqref{heisen_Rellich22} using the nonnegativity of $T_{\alpha,\beta}T_{\alpha,\beta}^{+}$. By writing \eqref{strat_TT1_ab} on Heisenberg group and subtracting the expression $T_{\alpha,\beta}^{+}T_{\alpha,\beta}$ in \eqref{heisen_TT1} from this, we get
\begin{equation}\label{heisen_TT11}
\begin{split}
& (T_{\alpha,\beta}T^{+}_{\alpha,\beta}f)(x) \\
&=(\L^{2}f)(x)-(2\beta-N\alpha)\frac{(\L f)(x)}{|\widetilde{x}'|^{2}}\\
&+(6(2-N)\alpha+4\beta-(N-3)\alpha^{2})\frac{\widetilde{x}'\cdot(\nabla_{H} f)(x)}{|\widetilde{x}'|^{4}}\\
&+(\beta^{2}+2(N-4)\beta-N\alpha\beta+2\alpha^{2}(N-2)-2\alpha(N-4)(N-2))\frac{f(x)}{|\widetilde{x}'|^{4}}\\
&-\alpha(\alpha+4)\frac{\sum_{j,k=1}^{N}\widetilde{x}'_{j}\widetilde{x}'_{k}(\widetilde{X}_{j}\widetilde{X}_{k}f)(x)}{|\widetilde{x}'|^{4}}\\
& - 2\alpha \sum_{j=1}^{n}\frac{({x}_{j}Y_j-y_j X_j)  Tf(x)}{|\widetilde{x}'|^{2}}.
\end{split}
\end{equation}
Then using the nonnegativity of $T_{\alpha,\beta}T^{+}_{\alpha,\beta}$ and integrating by parts, one has
$$\int_{\mathbb{H}_{n}}|(T_{\alpha,\beta}f)(x)|^{2}dx=\int_{\mathbb{H}_{n}}\overline{f(x)}(T_{\alpha,\beta}T^{+}_{\alpha,\beta}f)(x)dx\geq0.$$
Putting \eqref{heisen_TT11} into this inequality, one gets
\begin{equation}\label{heisen_TT_int_part11}
\begin{split}
& \int_{\mathbb{H}_{n}}|(\mathcal{L}f)(x)|^{2}dx-(2\beta-N\alpha)\int_{\mathbb{H}_{n}}\frac{\overline{f(x)}(\L f)(x)}{|\widetilde{x}'|^{2}}dx\\&
+(6(2-N)\alpha+4\beta-(N-3)\alpha^{2})\int_{\mathbb{H}_{n}}\frac{\overline{f(x)}(\widetilde{x}'\cdot (\nabla_{H}f)(x))}{|\widetilde{x}'|^{4}}dx\\&
+(\beta^{2}+2(N-4)\beta-N\alpha\beta+2\alpha^{2}(N-2)-2\alpha(N-4)(N-2))\int_{\mathbb{H}_{n}}
\frac{|f(x)|^{2}}{|\widetilde{x}'|^{4}}dx\\&
-\alpha(\alpha+4)\sum_{j,k=1}^{N}\int_{\mathbb{H}_{n}}\frac{\overline{f(x)}\widetilde{x}'_{j}\widetilde{x}'_{k}
(\widetilde{X}_{j}\widetilde{X}_{k}f)(x)}{|\widetilde{x}'|^{4}}dx\\&
- 2\alpha \sum_{j=1}^{n}\int_{\mathbb{H}_{n}}\frac{\overline{f(x)} ({x}_{j}Y_j-y_j X_j)  Tf(x)}{|\widetilde{x}'|^{2}}
dx\geq0.
\end{split}
\end{equation}
Then, similarly as for $T^{+}_{\alpha,\beta}T_{\alpha,\beta}$, i.e. putting \eqref{EQ:Heis-tan2},  \eqref{heisen_TT_int_part2} and \eqref{heisen_TT_int_part3} in \eqref{heisen_TT_int_part11}, we obtain \eqref{heisen_Rellich11} and \eqref{heisen_Rellich22}.
\end{proof}


\begin{thebibliography}{HOHOLT08}

\bibitem[BT02]{BadTar:ARMA-2002}
N. Badiale and G. Tarantello.
\newblock A Sobolev-Hardy inequality with applications to a nonlinear elliptic equation arising in astrophysics.
\newblock {\em Arch. Ration. Mech. Anal.}, 163:259--293, 2002.

\bibitem[DGP11]{DGP-Hardy-potanal}
D.~Danielli, N.~Garofalo, and N.~C. Phuc.
\newblock Hardy-{S}obolev type inequalities with sharp constants in
  {C}arnot-{C}arath{\'e}odory spaces.
\newblock {\em Potential Anal.}, 34(3):223--242, 2011.

\bibitem[FS82]{FS-Hardy}
G.~B. Folland and E.~M. Stein.
\newblock {\em Hardy spaces on homogeneous groups}, volume~28 of {\em
  Mathematical Notes}.
\newblock Princeton University Press, Princeton, N.J.; University of Tokyo
  Press, Tokyo, 1982.

\bibitem[Fol75]{F75}
G.~B. Folland.
\newblock Subelliptic estimates and function spaces on nilpotent {L}ie groups.
\newblock {\em Ark. Mat.}, 13(2):161--207, 1975.

\bibitem[FR16]{FR16}
V.~Fischer and M.~Ruzhansky.
\newblock {\em Quantization on nilpotent Lie groups}, volume 314 of {\em
  Progress in Mathematics}.
\newblock Birkh\"auser, 2016. (open access book)


\bibitem[GL90]{GL}
N.~Garofalo and E.~Lanconelli.
\newblock Frequency functions on the {H}eisenberg group, the uncertainty
  principle and unique continuation.
\newblock {\em Ann. Inst. Fourier (Grenoble)}, 40(2):313--356, 1990.

\bibitem[Ges84]{G84}
F.~Gesztesy.
On non-degenerate ground states for Schr\"{o}dinger operators.
{\em Rep. Math. Phys.}, 20:93--109, 1984.

\bibitem[GP80]{GP80}
F.~Gesztesy and L.~Pittner.
A generalization of the virial theorem for strongly singular potentials.
{\em Rep. Math. Phys.}, 18:149--162, 1980.

\bibitem[GL17]{GL17}
F.~Gesztesy and L.~Littlejohn.
Factorizations and Hardy-Rellich-type inequalities.
\newblock {\em arXiv:1701.08929v2}, 2017.


\bibitem[GK08]{GolKom}
J.~A. Goldstein and I.~Kombe.
\newblock The {H}ardy inequality and nonlinear parabolic equations on {C}arnot
  groups.
\newblock {\em Nonlinear Anal.}, 69(12):4643--4653, 2008.

\bibitem[Gri03]{Grillo:Hardy-Rellich-PA-2003}
G.~Grillo.
\newblock Hardy and {R}ellich-type inequalities for metrics defined by vector
  fields.
\newblock {\em Potential Anal.}, 18(3):187--217, 2003.

\bibitem[JS13]{Jin-Shen:Hardy-Rellich-AM-2011}
Y.~Jin and S.~Shen.
\newblock Weighted {H}ardy and {R}ellich inequality on {C}arnot groups.
\newblock {\em Arch. Math. (Basel)}, 96(3):263--271, 2011.

\bibitem[KO13]{Kombe-Ozaydin:Hardy-Rellich-mfds}
I.~Kombe and M.~{{\"O}}zaydin.
\newblock Hardy-{P}oincar{\'e}, {R}ellich and uncertainty principle
  inequalities on {R}iemannian manifolds.
\newblock {\em Trans. Amer. Math. Soc.}, 365(10):5035--5050, 2013.


\bibitem[KS16]{Kogoj-Sonner:Hardy-lambda-CVEE-2016}
A.~E. Kogoj and S.~Sonner.
\newblock Hardy type inequalities for ${\Delta}_{\lambda }$-{L}aplacians.
\newblock {\em Complex Var. Elliptic Equ.}, 61(3):422--442, 2016.

\bibitem[Lia13]{Lian:Rellich}
B.~Lian.
\newblock Some sharp {R}ellich type inequalities on nilpotent groups and
  application.
\newblock {\em Acta Math. Sci. Ser. B Engl. Ed.}, 33(1):59--74, 2013.

\bibitem[NZW01]{NZW-Hardy-p}
P.~Niu, H.~Zhang, and Y.~Wang.
\newblock Hardy type and {R}ellich type inequalities on the {H}eisenberg group.
\newblock {\em Proc. Amer. Math. Soc.}, 129(12):3623--3630, 2001.

\bibitem[RS16]{RS16}
M.~Ruzhansky and D.~Suragan.
\newblock Hardy and Rellich inequalities, identities, and sharp remainders on homogeneous groups.
\newblock {\em 	  arXiv:1603.06239v1}, 2016.

\bibitem[RS17a]{RS17a}
M.~Ruzhansky and D.~Suragan.
On horizontal Hardy, Rellich, Caffarelli-Kohn-Nirenberg and $p$-sub-Laplacian inequalities on stratified groups.
{\em J. Differential Equations}, 262:1799--1821, 2017.

\bibitem[RS17b]{RS17b}
M.~Ruzhansky and D.~Suragan.
\newblock Layer potentials, {K}ac's problem, and refined {H}ardy inequality on
homogeneous {C}arnot groups.
\newblock {\em Adv. Math.}, 308:483--528, 2017.

\bibitem[RS17c]{RS17c}
M.~Ruzhansky and D.~Suragan.
Local Hardy and Rellich inequalities for sums of squares of vector fields.
{\em Adv. Diff. Equations}, 22:505--540, 2017.

\bibitem[RSY16]{RSY16}
M.~Ruzhansky, D.~Suragan and N.~Yessirkegenov.
\newblock Sobolev inequalities, Euler-Hilbert-Sobolev and Sobolev-Lorentz-Zygmund spaces on homogeneous groups.
\newblock {\em 	 arXiv:1610.03379v2}, 2016.

\bibitem[RSY17]{RSY17}
M.~Ruzhansky, D.~Suragan and N.~Yessirkegenov.
Extended Caffarelli-Kohn-Nirenberg inequalities and superweights for $L^p$-weighted Hardy inequalities,
{\em C. R. Acad. Sci. Paris},  355:694--698, 2017.

\bibitem[SS17]{SS17}
B.~Sabitbek and D.~Suragan.
\newblock Horizontal weighted Hardy Rellich type Inequalities
on stratified Lie groups.
\newblock {\em Complex Anal. Oper. Theory}, http://dx.doi.org/10.1007/s11785-017-0650-z

\bibitem[SSW03]{Secchi-Smets-Willen}
S.~Secchi, D.~Smets and M.~Willen.
\newblock Remarks on a Hardy-Sobolev inequality.
\newblock {\em C. R. Acad. Sci. Paris, Ser. I}, 336:811--815, 2003.

\bibitem[YKG17]{YKG17}
A.~Yener, I.~Kombe and J.~Goldstein.
\newblock A unified approach to weighted Hardy type inequalities on Carnot groups.
\newblock {\em Discrete Contin. Dyn. Syst.}, 37(4):2009--2021, 2017.

\end{thebibliography}
\end{document}